
\documentclass[numbers,webpdf,imaiai]{ima-authoring-template}%

\graphicspath{{Fig/}}


\theoremstyle{thmstyletwo}%
\newtheorem{theorem}{Theorem}
%

%
\newtheorem{remark}{Remark}%

\numberwithin{equation}{section}

\usepackage{bm,empheq,nicefrac}

\setlength{\leftmargini}{2.5em}

\newtheorem{lemma}{Lemma}

\allowdisplaybreaks



\begin{document}

\DOI{DOI HERE}
\copyrightyear{2025}
\vol{00}
\pubyear{2025}
\access{Advance Access Publication Date: Day Month Year}
\appnotes{Paper}
\copyrightstatement{Published by Oxford University Press on behalf of the Institute of Mathematics and its Applications. All rights reserved.}
\firstpage{1}


\title[Convergence Analysis of DRLM Method for NS Equations]{Convergence analysis of the dynamically regularized Lagrange multiplier method for the incompressible Navier-Stokes equations}

\author{Cao-Kha Doan and Thi-Thao-Phuong Hoang*
\address{\orgdiv{Department of Mathematics and Statistics}, \orgname{Auburn University}, \orgaddress{\street{Auburn},  \state{AL 36849}, \country{USA}}}}
\author{Lili Ju
\address{\orgdiv{Department of Mathematics}, \orgname{University of South Carolina}, \orgaddress{\street{Columbia}, \state{SC 29208}, \country{USA}}}}
\author{Rihui Lan
\address{\orgdiv{School of Mathematical Sciences}, \orgname{Ocean University of China}, \orgaddress{\state{Qingdao 266100}, \country{China}}}
}


\authormark{C.K. Doan, T.T.P. Hoang, L. Ju, and R. Lan}

\corresp[*]{Corresponding author: \href{email:email-id.com}{tzh0059@auburn.edu}}

\received{Date}{0}{Year}
\revised{Date}{0}{Year}
\accepted{Date}{0}{Year}


\abstract{This paper is concerned with temporal convergence analysis of the recently introduced Dynamically Regularized Lagrange Multiplier (DRLM) method for the incompressible Navier-Stokes equations.  A key feature of the DRLM approach is the incorporation of the kinetic energy evolution through a quadratic dynamic equation involving a time-dependent Lagrange multiplier and a regularization parameter.  We apply the backward Euler method with an explicit treatment of the nonlinear convection term and show the unique solvability of the resulting first-order DRLM scheme.  Optimal error estimates for the velocity and pressure are established based on a uniform bound on the Lagrange multiplier and mathematical induction.  Numerical results confirm the theoretical convergence rates and error bounds that decay with respect to the regularization parameter. 
}
\keywords{Incompressible Navier-Stokes equations; Lagrange multiplier; dynamic regularization; energy stability; error estimates.}


\maketitle

\section{Introduction}
The incompressible Navier-Stokes (NS) equations are among the most fundamental models in fluid dynamics, governing the motion of incompressible viscous fluids. They arise in a wide range of applications, from engineering and meteorology to biology and geophysics. Despite their long history and extensive study, developing accurate, efficient, and energy stable numerical methods for the NS equations has been an active topic in recent years. 

The main difficulty in designing energy stable schemes for the NS equations is the treatment of the nonlinear convection term.  Fully implicit schemes~\cite{Mansour90,Turek96} require solving nonlinear systems, which could be computationally expensive.  Semi-implicit schemes~\cite{Weinan95,Tone04,He-Sun07,Li22,Gar23},  on the other hand, solve only linear systems with either constant or variable coefficients, depending on how the convection term is treated.  Due to their computational efficiency,  in what follows we shall focus on semi-implicit schemes with fully explicit convection.  A popular approach is based on the scalar auxiliary variable (SAV) method, which was originally developed for gradient flow problems~\cite{Shen18,Liu20} and later extended to the NS equations~\cite{L-Dong19,L-Shen20,L-Shen22,Zhang22,Ji24,Lan25}. In~\cite{L-Dong19}, an auxiliary variable associated with the kinetic energy was introduced to reformulate the NS equations into an equivalent form with an extra dynamic equation for the auxiliary variable. The resulting schemes are unconditionally energy diminishing, marking the first time that numerical schemes with explicit treatment of the convection term preserve this key property. Since the auxiliary variable is computed from a nonlinear algebraic equation whose existence and uniqueness of a positive solution are not theoretically guaranteed, later in~\cite{L-Shen22} an exponential decay variable was introduced and only a linear equation needs to be solved for the auxiliary variable at each time step. Based on energy stability results, the authors established temporal error estimates for the first-order exponential SAV scheme in two dimensions. Fully discrete convergence analysis of both first- and second-order exponential SAV schemes was recently carried out in~\cite{Lan25}, where a grad-div stabilization term and the choice of finite element spaces play an essential role. Another class of semi-implicit schemes is the Lagrange multiplier-based approach~\cite{Cheng20,Hou23,Cheng24,Y-Kim22}, where intrinsic physical properties, such as energy dissipation laws, are incorporated into a reformulated system to preserve energy stability in terms of the original variables. We refer to~\cite{Diegel17,Liu23,Liu24,Chen24,Huang24,Yang24,Zhu24,Li25} for various fully and semi-implicit schemes applied to the coupled Cahn-Hilliard-Navier-Stokes system, together with their error analysis.

It is well-known that most SAV methods lead to stability results with respect to certain modified energy,  whereas traditional Lagrange multiplier approaches require the time step size to be sufficiently small.  To overcome these limitations while maintaining computational efficiency,  we proposed in~\cite{Doan25} a novel numerical method, namely the \textit{dynamically regularized Lagrange multiplier} (DRLM) method, for the incompressible NS equations.  A dynamic equation (involving the kinetic energy evolution, a Lagrange multiplier, and a regularization term scaled by a positive  parameter $\theta$) is introduced to reformulate the NS equations into an equivalent system. Based on this reformulation, DRLM schemes using backward differentiation formulas (BDF) with explicit treatment of the convection term can be constructed. In~\cite{Doan25}, the first- and second-order DRLM schemes were shown to be unconditionally energy stable with respect to the original variables. Importantly, we observe numerically that the regularization parameter $\theta$ not only guarantees the unique determination of the Lagrange multiplier but also allows for large time step sizes without affecting accuracy and stability of the numerical solutions.  Although DRLM schemes exhibit robust numerical performance across various benchmark tests~\cite{Doan25}, their rigorous error analysis has remained an open question. It should be noted that the DRLM framework can be naturally extended to the coupled Cahn-Hilliard-Navier-Stokes system~\cite{Doan25b} without requiring additional stabilization terms. 

Convergence analysis of DRLM schemes poses significant challenges compared to most existing approaches. A key difficulty lies in the nonlinear coupling of the Lagrange multiplier with the velocity and pressure through a quadratic equation, leading to a nonlinear error equation that is not easily handled by available error analysis frameworks.
Standard techniques, including those developed for SAV methods~\cite{L-Shen22,Lan25,Li25}, result in only suboptimal convergence rates and thus fail to fully reflect the accuracy of DRLM methods.  In addition, it is crucial to establish error bounds that decay with respect to the regularization parameter $\theta$, as indicated by the numerical results in~\cite{Doan25}. We note that the uniform velocity bounds obtained from energy stability estimates depend on $\theta$ and therefore cannot be used to derive $\theta$-robust error bounds.  Finally, the explicit treatment of the nonlinear convection term introduces additional technical difficulties, especially in three dimensions.

The objective of this work is to carry out temporal error analysis for the first-order DRLM scheme that addresses the aforementioned challenges.  Optimal convergence rates for the velocity and Lagrange multiplier are obtained via a series of intermediate steps that mainly handle the coupling between the two quantities and the error of the Lagrange multiplier.  Gronwall's inequality is applied to both velocity and Lagrange multiplier,  followed by an induction argument to control an extra term arising from explicit treatment of the convection term. Optimal convergence rate for the pressure is then derived based on the inf-sup (or LBB) condition, along with certain bounds on the discrete time derivatives of the velocity and Lagrange multiplier errors.  Note that, while the pressure estimate in the fully discrete setting~\cite{Lan25} follows more directly from the velocity estimate, the semi-discrete pressure error analysis is more involved due to the absence of an inequality associated with the Stokes operator (cf. Remark~\ref{rm:pressure}).  Additionally, in contrast to other approaches that rely heavily on velocity bounds from energy stability, our error estimates are independent of such bounds, and all error constants align well with numerical observations related to the regularization parameter $\theta$ in~\cite{Doan25}.  Moreover,  the techniques developed in this paper are valid in both two and three dimensions. 

The rest of the paper is organized as follows.  In Section~\ref{sec:model}, we present the incompressible NS equations, along with the well-posedness results and regularity assumptions on the data. The DRLM formulation and  discretization schemes are introduced in Section~\ref{sec:DRLM}, followed by important properties of the numerical solution produced by the first-order scheme.  Optimal temporal error estimates for the velocity, Lagrange multiplier, and pressure are derived in Section~\ref{sec:analysis}.  The theoretical results are confirmed by numerical experiments presented in Section~\ref{sec:numeric}.  Finally,  some concluding remarks are given in Section~\ref{sec:conclusion}.

\section{Model problem}\label{sec:model}
For an open, bounded domain $\Omega\subset\mathbb{R}^d$ ($d=2,3$) and a terminal time $T>0$, we consider the following incompressible NS equations:  \vspace{-0.2cm}
\begin{subequations}\label{PDE}
\begin{empheq}[left=\empheqlbrace]{align}
    \bm u_t-\nu\Delta \bm u+(\bm u\cdot\nabla)\bm u+\nabla p&=\bm f,  \quad\text{in } \Omega\times (0,T], \label{PDE_1}\\
    \nabla\cdot\bm u&=0, \,\quad\text{in } \Omega\times (0,T], \label{PDE_2} 
\end{empheq} 
\end{subequations}
subject to the initial condition $\bm u(\cdot,0)=\bm u_0$ and no-slip boundary condition, i.e. $\bm u|_{\partial\Omega}=0$. In~\eqref{PDE}, $\bm u$ and $p$ are the unknown velocity vector and pressure,  respectively,  $\bm f$ is an external force, and $\nu$ represents the kinematic viscosity. 

Let $L^2(\Omega)$, $H^k(\Omega)$, and $H^k_0(\Omega)$ denote the standard Sobolev spaces over $\Omega$, with $\bm L^2(\Omega)$, $\bm H^k(\Omega)$, and $\bm H^k_0(\Omega)$ as their $d$-dimensional extensions, e.g., $\bm H^k(\Omega)=(H^k(\Omega))^d$. The norms on $H^k(\Omega)$ and $\bm H^k(\Omega)$ are indicated by $\|\cdot\|_k$. For $L^2(\Omega)$ and $\bm L^2(\Omega)$, we denote by $(\cdot,\cdot)$ and $\|\cdot\|_0$ the inner product and the associated norm, respectively. Since the pressure is defined up to an additive constant in the NS equations, we define $H^k(\Omega)/\mathbb{R}$ as the quotient space consisting of equivalence classes of elements of $H^k(\Omega)$ differing by constants. When $k=0$, the quotient space is denoted by $L^2(\Omega)/\mathbb{R}$. For $\varphi\in H^k(\Omega)/\mathbb{R}$, its norm is given by $\|\varphi\|_{H^k/\mathbb{R}}=\inf_{c\in\mathbb{R}}\|\varphi+c\|_k. $
Denote by $\bm H$ and $\bm V$ the following Hilbert spaces:  \vspace{-0.2cm}
\begin{align*}
\bm H&=\{\bm u\in\bm L^2(\Omega):\nabla\cdot\bm u=0,\;\bm u\cdot\bm n|_{\partial\Omega}=0\}, \quad \bm V=\{\bm v\in \bm H_0^1(\Omega):\nabla\cdot\bm v=0\}.  \vspace{-0.2cm}
\end{align*}

Let us state a result on the existence and uniqueness of a strong solution to the NS equations~\eqref{PDE}, which can be found, for example, in~\cite{Heywood82,Heywood90,Temam95,Boyer13}.  \vspace{-0.2cm}
\begin{theorem}
Assume $\bm u_0\in \bm V$ and $\bm f\in L^{\infty}(0,T;\bm L^2(\Omega))$. There exists a positive time $T^*$, with $T^*=T$ if $d=2$ and $T^*=T^*(\bm u_0)\le T$ if $d=3$, such that the NS equations~\eqref{PDE} admit a unique strong solution $(\bm u,p)$ satisfying
\begin{align*}
&\bm u\in L^2(0,T^*;\bm H^2(\Omega))\cap{C}([0,T^*];\bm V), \quad \bm u_t\in L^2(0,T^*;\bm L^2(\Omega)),\\
&p\in L^2(0,T^*;H^1(\Omega)/\mathbb{R}).
\end{align*}
Moreover, if $d=2$ or, in the case $d=3$, if $\|\bm u_0\|_1$ and $\|\bm f\|_{L^{\infty}(0,T;\bm L^2(\Omega))}$ are sufficiently small, then the solution $(\bm u,p)$ exists for all $t\in [0,T]$, i.e. $T^*=T$ for $d\in\{2,3\}$, and  
\begin{align} \label{make_assump}
\sup_{t\in [0,T]}\|\bm u(t)\|_1<\infty.  
\end{align}
\end{theorem}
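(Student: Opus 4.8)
The plan is to establish this classical result via the Faedo--Galerkin method, energy estimates, and a compactness argument, with the dimension entering only through the control of the nonlinear convection term. First I would recast \eqref{PDE} in the divergence-free setting: applying the Leray projection $P:\bm L^2(\Omega)\to\bm H$ removes the pressure and yields the abstract evolution equation $\bm u_t+\nu A\bm u+P[(\bm u\cdot\nabla)\bm u]=P\bm f$, where $A=-P\Delta$ is the Stokes operator with domain $\bm H^2(\Omega)\cap\bm V$. Let $\{\bm w_k\}_{k\ge1}$ denote the eigenfunctions of $A$, which form an orthonormal basis of $\bm H$ and an orthogonal basis of $\bm V$; seeking $\bm u_m=\sum_{k=1}^m g_{km}(t)\bm w_k$ reduces the problem to a finite-dimensional system of ordinary differential equations for the coefficients $g_{km}$, whose local-in-time solvability is guaranteed by the Cauchy--Lipschitz theorem.

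The heart of the proof is a set of a priori estimates uniform in $m$. Testing the Galerkin system with $\bm u_m$ gives the basic energy identity and hence uniform bounds in $L^\infty(0,T;\bm L^2(\Omega))\cap L^2(0,T;\bm V)$. To reach the $\bm H^2$-regularity I would test with $A\bm u_m$, the only delicate term being the trilinear form $((\bm u_m\cdot\nabla)\bm u_m,A\bm u_m)$. This is where the dimension becomes decisive: via the Ladyzhenskaya--Gagliardo--Nirenberg inequalities together with the elliptic bound $\|\bm u_m\|_2\le C\|A\bm u_m\|_0$, the convection term is absorbed into $\nu\|A\bm u_m\|_0^2$ after Young's inequality, leaving a differential inequality for $\|\bm u_m\|_1^2$. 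In two dimensions this inequality is, after inserting the basic energy bound, essentially linear in $\|\bm u_m\|_1^2$ with an $L^1$ coefficient, so Gronwall's lemma yields bounds on the whole interval $[0,T]$; in three dimensions the right-hand side is superlinear (of the form $C\|\bm u_m\|_1^6$), so a comparison argument only guarantees bounds on a maximal subinterval $[0,T^*)$ determined by $\bm u_0$, and global control is recovered precisely when $\|\bm u_0\|_1$ and $\|\bm f\|_{L^{\infty}(0,T;\bm L^2(\Omega))}$ are small enough to keep the solution of the comparison ODE bounded for all $t\le T$. The bound \eqref{make_assump} is then exactly this uniform-in-$m$ $\bm V$-estimate passed to the limit.

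With these estimates in hand, the remaining steps are standard. The uniform bounds, combined with the equation-derived bound $\bm u_{m,t}\in L^2(0,T^*;\bm L^2(\Omega))$, allow an Aubin--Lions compactness argument to extract a subsequence converging strongly in $L^2(0,T^*;\bm V)$ and weakly in $L^2(0,T^*;\bm H^2(\Omega))$; the strong convergence is what lets me pass to the limit in the nonlinear term and identify the limit $\bm u$ as a strong solution. The pressure is recovered by de Rham's theorem: the residual $\bm u_t-\nu\Delta\bm u+(\bm u\cdot\nabla)\bm u-\bm f$ annihilates all divergence-free test fields, hence equals $-\nabla p$ for some $p\in L^2(0,T^*;H^1(\Omega)/\mathbb{R})$, with the stated regularity following from the $\bm L^2$-bounds on the other terms. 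The continuity $\bm u\in C([0,T^*];\bm V)$ is obtained from the standard interpolation result (Lions--Magenes) applied to $\bm u\in L^2(0,T^*;\bm H^2(\Omega))$ with $\bm u_t\in L^2(0,T^*;\bm L^2(\Omega))$. Finally, uniqueness follows by subtracting two solutions, testing the difference equation with the velocity difference, estimating the nonlinear term by the same Ladyzhenskaya inequalities, and applying Gronwall's lemma.

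I expect the main obstacle to be the higher-order estimate and its dimension-dependent closure, since controlling $((\bm u_m\cdot\nabla)\bm u_m,A\bm u_m)$ is precisely the point at which the two- and three-dimensional theories diverge and at which the maximal existence time $T^*$ and the smallness condition arise; everything else is a routine adaptation of the Galerkin and compactness machinery.
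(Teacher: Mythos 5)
The paper offers no proof of this theorem: it is stated as a classical well-posedness result and delegated to the references \cite{Heywood82,Heywood90,Temam95,Boyer13}. Your proposal correctly reconstructs the standard Faedo--Galerkin argument found in precisely those sources — in particular you rightly locate the crux in the estimate of the trilinear term $((\bm u_m\cdot\nabla)\bm u_m,A\bm u_m)$ within the $\bm H^2$-level energy estimate, where $d=2$ yields a differential inequality that is linear in $\|\bm u_m\|_1^2$ with an $L^1$-in-time coefficient (hence $T^*=T$ by Gronwall), while $d=3$ yields a superlinear inequality giving only local existence unless $\|\bm u_0\|_1$ and $\|\bm f\|_{L^\infty(0,T;\bm L^2(\Omega))}$ are small, exactly as the statement requires.
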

The goal of this paper is to establish temporal error estimates for the DRLM method applied to the NS equations. Toward that end,  throughout the paper we assume $\bm u_0$ and $\bm f$ satisfy the following regularity conditions:
\begin{equation} \label{Assump:1}
\bm u_0\in \bm V\cap \bm H^2(\Omega), \; \text{and} \; \, \bm f\in L^{\infty}(0,T;\bm L^2(\Omega)).
\end{equation}
%

\section{DRLM method and properties}\label{sec:DRLM}
In this section, we recall the DRLM formulation introduced in~\cite{Doan25}, which is equivalent to the original NS equations~\eqref{PDE}. Based on this reformulation, we present the first-order DRLM scheme and analyze its fundamental properties. In particular, we establish the unique solvability of the scheme and derive a uniform bound on the Lagrange multiplier, which plays a crucial role in the subsequent error analysis.  Although the DRLM framework naturally extends to higher-order schemes, their rigorous analysis is highly challenging and remains an open problem. 


\subsection{DRLM formulation and discretization schemes}
The idea of the DRLM approach~\cite{Doan25} is to introduce a time-dependent Lagrange multiplier $q(t)$ with $q(0)=1$ and a constant regularization parameter $\theta>0$ to reformulate the NS equations~\eqref{PDE} equivalently as follows:
\begin{subequations}\label{PDE_new}
\begin{empheq}[left=\empheqlbrace]{alignat=2}
&\bm u_t-\nu\Delta\bm u+q(\bm u\cdot\nabla)\bm u+\nabla p=\bm f,&\quad&\text{ in } \Omega\times (0,T],\\
&\nabla\cdot\bm u=0,&\quad&\text{ in } \Omega\times (0,T],\label{PDE_new_2}\\
&\frac{d\mathcal{K}(\bm u)}{dt}+\theta\frac{dq^2}{dt}=(\bm u_t+q\bm (\bm u\cdot\nabla)\bm u,\bm u),&\quad&\text{ in } (0,T],\label{PDE_new_3}
\end{empheq}
\end{subequations}
under the same initial and boundary conditions. In~\eqref{PDE_new_3}, $\mathcal{K}(\bm u)=\frac 12\|\bm u\|_0^2$ denotes the kinetic energy associated with~\eqref{PDE} and the notation $(\cdot,\cdot)$ represents the standard $\bm L^2$ inner product. 

For temporal discretization, let us consider a uniform partition of the time interval $[0,T]$: $0=t_0<t_1<\cdots<t_N=T$ with the time step size $\tau=\nicefrac TN$.  From the reformulated system~\eqref{PDE_new},  DRLM schemes can be derived using BDF methods with explicit treatment of the nonlinear convection term \cite{Doan25}.  In particular, the first-order DRLM scheme, based on the first-order BDF (i.e., backward Euler) method, is given by 
\begin{subequations}\label{DRLM1}
\begin{empheq}[left=\empheqlbrace]{align}
&\frac{\bm u^{n+1}-\bm u^n}{\tau}-\nu\Delta\bm u^{n+1}+q^{n+1}(\bm u^n\cdot\nabla)\bm u^n+\nabla p^{n+1}=\bm f(t_{n+1}),\label{be1}\\
&\nabla\cdot\bm u^{n+1}=0,\quad \bm u^{n+1}|_{\partial\Omega}=0,\label{be2}\\
&\frac{\mathcal{K}(\bm u^{n+1})-\mathcal{K}(\bm u^n)}{\tau}+\theta\frac{(q^{n+1})^2-(q^n)^2}{\tau}=\left(\frac{\bm u^{n+1}-\bm u^n}{\tau}+q^{n+1}(\bm u^n\cdot\nabla)\bm u^n,\bm u^{n+1}\right).\label{be3} \vspace{0.2cm}
\end{empheq}    
\end{subequations}
We refer to~\cite{Doan25} for the formulation of the second-order DRLM scheme, as well as the unconditional energy stability results for both first- and second-order schemes. Note that the initial conditions for the velocity and Lagrange multiplier are $\bm u^0=\bm u_0$ and $q^0=1$. 
An efficient implementation of the DRLM scheme~\eqref{DRLM1}, which involves solving two generalized Stokes systems and a quadratic equation for the Lagrange multiplier at each time step, is presented in~\cite{Doan25} and thus omitted.


Hereafter, for analysis purposes, we assume that $\theta$ is bounded from below by a positive constant, i.e., $\theta\ge \theta_{\mathrm{tol}}$ for some $\theta_{\mathrm{tol}}>0$. For simplicity, we take $\theta_{\mathrm{tol}}=1$ so that $\theta\ge 1$. It should be noted that when $\theta$ is too small or zero, DRLM schemes produce inaccurate values for the Lagrange multiplier unless the time step size is sufficiently small, which affects the accuracy of the velocity and pressure.  More details about numerical performance of the first- and second-order DRLM schemes with different values of~$\theta$ can be found in \cite{Doan25}.

\subsection{Properties}
We begin with the unique solvability and regularity results of the DRLM scheme~\eqref{DRLM1}. 
\begin{lemma}\label{lemma:!}
The DRLM scheme~\eqref{DRLM1} admits a unique solution $(\bm u^{n},p^n,q^{n})$ with
$$\bm u^n\in \bm V\cap\bm H^2(\Omega),\quad p^n\in H^1(\Omega)/\mathbb{R},\quad q^{n}>0,$$
for every $n\in\{1,2,\ldots,N\}$.
\end{lemma}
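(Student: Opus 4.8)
The plan is to argue by induction on $n$, constructing $(\bm u^{n+1},p^{n+1},q^{n+1})$ from $(\bm u^n,p^n,q^n)$ while exploiting the fact that the momentum equation \eqref{be1}--\eqref{be2} is \emph{affine} in the unknown scalar $q^{n+1}$. For the base case one takes $\bm u^0=\bm u_0\in\bm V\cap\bm H^2(\Omega)$ (guaranteed by \eqref{Assump:1}) and $q^0=1>0$. Assuming $\bm u^n\in\bm V\cap\bm H^2(\Omega)$ and $q^n>0$, I would first decouple the scalar from the field unknowns by introducing two auxiliary generalized Stokes problems whose data do not involve $q^{n+1}$: one with right-hand side $\bm f(t_{n+1})+\bm u^n/\tau$ producing $(\bm u_1,p_1)$, and one with right-hand side $-(\bm u^n\cdot\nabla)\bm u^n$ producing $(\bm u_2,p_2)$. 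Since $\bm u^n\in\bm H^2(\Omega)\hookrightarrow\bm L^{\infty}(\Omega)$ for $d\le 3$ and $\nabla\bm u^n\in\bm L^2(\Omega)$, the convection term $(\bm u^n\cdot\nabla)\bm u^n$ lies in $\bm L^2(\Omega)$; together with $\bm f(t_{n+1})\in\bm L^2(\Omega)$, standard Stokes theory yields unique solutions with $\bm u_1,\bm u_2\in\bm V\cap\bm H^2(\Omega)$ and $p_1,p_2\in H^1(\Omega)/\mathbb{R}$. By linearity, $\bm u^{n+1}=\bm u_1+q^{n+1}\bm u_2$ and $p^{n+1}=p_1+q^{n+1}p_2$ solve \eqref{be1}--\eqref{be2} for \emph{any} value of $q^{n+1}$, so it remains only to pin down the scalar.

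Substituting this representation into the energy relation \eqref{be3} reduces the problem to a single scalar quadratic equation $A(q^{n+1})^2+Bq^{n+1}+C=0$. To prepare \eqref{be3}, I would use the elementary identity $\mathcal{K}(\bm u^{n+1})-\mathcal{K}(\bm u^n)-(\bm u^{n+1}-\bm u^n,\bm u^{n+1})=-\tfrac12\|\bm u^{n+1}-\bm u^n\|_0^2$ to rewrite it (after multiplying by $\tau$) as $-\tfrac12\|\bm u^{n+1}-\bm u^n\|_0^2+\theta((q^{n+1})^2-(q^n)^2)=\tau q^{n+1}((\bm u^n\cdot\nabla)\bm u^n,\bm u^{n+1})$, and then expand $\bm u^{n+1}=\bm u_1+q^{n+1}\bm u_2$. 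The crucial step is to simplify the coefficient of $(q^{n+1})^2$: testing the second Stokes problem against $\bm u_2\in\bm V$ gives the relation $((\bm u^n\cdot\nabla)\bm u^n,\bm u_2)=-\tfrac1\tau\|\bm u_2\|_0^2-\nu\|\nabla\bm u_2\|_0^2$, and inserting it collapses the leading coefficient to
\[
A=\theta+\tfrac12\|\bm u_2\|_0^2+\nu\tau\|\nabla\bm u_2\|_0^2,
\]
which is strictly positive (using $\theta\ge 1$). The constant term works out to $C=-\tfrac12\|\bm u_1-\bm u^n\|_0^2-\theta(q^n)^2$, which is strictly negative precisely because the induction hypothesis guarantees $q^n\neq 0$.

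With $A>0$ and $C<0$, the product of the two roots equals $C/A<0$, so the quadratic has exactly one positive and one negative real root; the positive one is the unique admissible $q^{n+1}>0$. This value then determines $\bm u^{n+1}$ and $p^{n+1}$ through the representation above, and these inherit the regularity $\bm u^{n+1}\in\bm V\cap\bm H^2(\Omega)$, $p^{n+1}\in H^1(\Omega)/\mathbb{R}$ from $\bm u_1,\bm u_2,p_1,p_2$, closing the induction.

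I anticipate the main obstacle to be the sign analysis of the quadratic rather than the Stokes well-posedness, which is routine. In particular, the positivity of $A$ is not apparent from \eqref{be3} alone; it emerges only after using the tested Stokes identity for $\bm u_2$ to cancel the indefinite cross term $-\tau((\bm u^n\cdot\nabla)\bm u^n,\bm u_2)$, and the requirement $\theta\ge 1$ (more generally $\theta>0$) is what makes this cancellation yield a strictly positive leading coefficient, independent of $\tau$ and of the data. Equally important is that the strict negativity of $C$—and hence the existence of a positive root—relies on carrying $q^n>0$ through the induction, so the positivity of the multiplier must be tracked simultaneously with existence and uniqueness rather than established afterward.
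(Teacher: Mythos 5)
Your proposal is correct and follows essentially the same route as the paper: the same two auxiliary generalized Stokes problems, the same affine representation $\bm u^{n+1}=\bm u_1^{n+1}+q^{n+1}\bm u_2^{n+1}$, and the same quadratic equation whose coefficients satisfy $A_{n+1}>0$ and $C_{n+1}<0$, yielding a unique positive root $q^{n+1}$. The only differences are presentational: where you appeal to standard Stokes uniqueness to justify that every solution of \eqref{DRLM1} must have the affine form, the paper verifies this explicitly by an energy argument for the difference $\widehat{\bm u}^{n+1}$; conversely, your explicit derivation of the leading coefficient (testing \eqref{u2p2} against $\bm u_2^{n+1}$ to cancel the indefinite cross term) is precisely the computation the paper performs implicitly when stating $A_{n+1}$.
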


\begin{proof}\hspace{0.1cm}
It is clear from the initial conditions and assumption~(\ref{Assump:1}) that $\bm u^0=\bm u_0\in \bm V\cap\bm H^2(\Omega)$ and $q^0=1>0$. Suppose $(\bm u^n,q^n)$ with $\bm u^n\in \bm V\cap\bm H^2(\Omega)$ and $q^n>0$ is uniquely determined from~\eqref{DRLM1} for some nonnegative integer $n$. We consider the following generalized Stokes systems:
\begin{alignat}{2}
&\frac{\bm u_1^{n+1}-\bm u^n}{\tau}-\nu\Delta\bm u_1^{n+1}+\nabla p_1^{n+1}=\bm f(t_{n+1}),&\quad \nabla\cdot\bm u_1^{n+1}=0,\quad \bm u_1^{n+1}|_{\partial\Omega}=0,\label{u1p1}\\
&\frac{\bm u_2^{n+1}}{\tau}-\nu\Delta\bm u_2^{n+1}+(\bm u^n\cdot\nabla)\bm u^n+\nabla p_2^{n+1}=0,&\quad \nabla\cdot\bm u_2^{n+1}=0,\quad \bm u_2^{n+1}|_{\partial\Omega}=0.\label{u2p2}
\end{alignat}
We know that $\frac{1}{\tau}\bm u^n+\bm f(t_{n+1})\in \bm L^2(\Omega)$ and $-(\bm u^n\cdot\nabla)\bm u^n\in \bm L^2(\Omega)$ by the inductive hypothesis and assumption~(\ref{Assump:1}). Based on the well-posedness and regularity results for Stokes systems (see, for instance, \cite{Catta61,Heywood82,Heywood90}), we conclude that~\eqref{u1p1} and~\eqref{u2p2} admit unique solutions, $(\bm u_1^{n+1},p_1^{n+1})$ and $(\bm u_2^{n+1},p_2^{n+1})$, respectively, satisfying 
\begin{align}\label{ui_pi}
\bm u_i^{n+1}\in \bm V\cap\bm H^2(\Omega)\, \text{ and }  \, p_i^{n+1}\in H^1(\Omega)/\mathbb{R} \;\text{ for }\; i=1,2.  
\end{align}
Setting $\bm u^{n+1}=\bm u_1^{n+1}+q^{n+1}\bm u_2^{n+1}$ and plugging it in~\eqref{be3} yields 
\begin{align*}
&\frac{\|\bm u_1^{n+1}+q^{n+1}\bm u_2^{n+1}\|_0^2-\|\bm u^n\|_0^2}{2\tau}+\theta\frac{(q^{n+1})^2-(q^n)^2}{\tau}\\
&=\left(\frac{\bm u_1^{n+1}+q^{n+1}\bm u_2^{n+1}-\bm u^n}{\tau}+q^{n+1}(\bm u^n\cdot\nabla)\bm u^n,\bm u_1^{n+1}+q^{n+1}\bm u_2^{n+1}\right),
\end{align*}
which can be simplified as follows:  
\begin{align}\label{quad}
A_{n+1}(q^{n+1})^2+B_{n+1}q^{n+1}+C_{n+1}=0,
\end{align}
where the quadratic coefficients are given by 
\begin{align*}
A_{n+1}&=\theta+\displaystyle \frac 12\|\bm u_2^{n+1}\|_0^2+\tau\nu\|\nabla \bm u_2^{n+1}\|_0^2,\\
B_{n+1}&=-\left(\bm u_1^{n+1}-\bm u^n,\bm u_2^{n+1}\right)-\tau\left((\bm u^n\cdot\nabla)\bm u^n,\bm u_1^{n+1}\right),\\
C_{n+1}&=-\theta(q^n)^2\displaystyle -\frac 12\|\bm u_1^{n+1}-\bm u^n\|_0^2.
\end{align*}
We observe that $A_{n+1}>0$ and $C_{n+1}<0$ for all $\theta>0$. As a consequence, \eqref{quad} has a unique positive solution, namely $q^{n+1}>0$. Let $p^{n+1}:=p_1^{n+1}+q^{n+1}p_2^{n+1}$. We find that $\bm u^{n+1}\in \bm V\cap\bm H^2(\Omega)$ and $p^{n+1}\in H^1(\Omega)/\mathbb{R}$ due to~\eqref{ui_pi}. Furthermore, $(\bm u^{n+1},p^{n+1},q^{n+1})$ satisfies the DRLM scheme~\eqref{DRLM1}, thereby verifying the existence of a solution to~\eqref{DRLM1} at $t_{n+1}$. 

To prove the uniqueness, we claim that if \eqref{DRLM1} admits a solution $(\bm u^{n+1},p^{n+1},q^{n+1})$, then   \vspace{-0.2cm}
\begin{subequations}\label{decom:key}
\begin{align}
\bm u^{n+1}&=\bm u_1^{n+1}+q^{n+1}\bm u_2^{n+1},\\
p^{n+1}&=p_1^{n+1}+q^{n+1}p_2^{n+1}+c^{n+1},  \vspace{-0.2cm}
\end{align}    
\end{subequations}
for any real number $c^{n+1}$ (i.e.,  the pressure is unique up to  a constant). Indeed, let $\widehat{\bm u}^{n+1}=\bm u^{n+1}-\bm u_1^{n+1}-q^{n+1}\bm u_2^{n+1}$ and $\widehat{p}^{n+1}=p^{n+1}-p_1^{n+1}-q^{n+1}p_2^{n+1}$. The combination of \eqref{be1}, \eqref{be2}, \eqref{u1p1}, and \eqref{u2p2} results in
\begin{subequations}
\begin{empheq}[left=\empheqlbrace]{align}
&\frac{\widehat{\bm u}^{n+1}}{\tau}-\nu\Delta\widehat{\bm u}^{n+1}+\nabla\widehat{p}^{n+1}=\bm 0,\label{bar_til_1}\\
&\nabla\cdot\widehat{\bm u}^{n+1}=0,\quad \widehat{\bm u}^{n+1}|_{\partial\Omega}=0.\label{bar_til_2}
\end{empheq}
\end{subequations}
Taking the $\bm L^2$ inner product of \eqref{bar_til_1} with $\widehat{\bm u}^{n+1}$ and utilizing \eqref{bar_til_2} gives us
\begin{align*}
\frac{\|\widehat{\bm u}^{n+1}\|_0^2}{\tau}+\nu\|\nabla\widehat{\bm u}^{n+1}\|_0^2=0,
\end{align*}
which leads to $\widehat{\bm u}^{n+1}=\bm 0$, and therefore, $\nabla\widehat{p}^{n+1}=\bm 0$ due to~\eqref{bar_til_1}. This means that $\bm u^{n+1}$ and $p^{n+1}$ satisfy the decompositions~\eqref{decom:key}. Hence, if both $(\overline{\bm u}^{n+1},\overline{p}^{n+1},\overline{q}^{n+1})$ and $(\tilde{\bm u}^{n+1},\tilde{p}^{n+1},\tilde{q}^{n+1})$ are the solutions of \eqref{DRLM1}, where $\overline{q}^{n+1}>0$ and $\tilde{q}^{n+1}>0$, then we have  \vspace{-0.2cm}
\begin{subequations}\label{decom:bar}
\begin{align}
\overline{\bm u}^{n+1}&=\bm u_1^{n+1}+\overline{q}^{n+1}\bm u_2^{n+1},\label{decom:bar1}\\
\overline{p}^{n+1}&=p_1^{n+1}+\overline{q}^{n+1}p_2^{n+1}+\overline{c}^{n+1},\end{align}  \vspace{-0.2cm}
\end{subequations}
and 
\begin{subequations}\label{decom:tilde}
\begin{align}
\tilde{\bm u}^{n+1}&=\bm u_1^{n+1}+\tilde{q}^{n+1}\bm u_2^{n+1},\label{decom:tilde1}\\
\tilde{p}^{n+1}&=p_1^{n+1}+\tilde{q}^{n+1}p_2^{n+1}+\tilde{c}^{n+1}.
\end{align}
\end{subequations}
By substituting \eqref{decom:bar1} and \eqref{decom:tilde1} into \eqref{be3} (with $(\bm u^{n+1},q^{n+1})$ replaced by $(\overline{\bm u}^{n+1},\overline{q}^{n+1})$ or $(\tilde{\bm u}^{n+1},\tilde{q}^{n+1})$), we find that $\overline{q}^{n+1}$ and $\tilde{q}^{n+1}$ are positive solutions of \eqref{quad}. Thus, we must have $\overline{q}^{n+1}=\tilde{q}^{n+1}$, and consequently, $\overline{\bm u}^{n+1}=\tilde{\bm u}^{n+1}$ and $\overline{p}^{n+1}=\tilde{p}^{n+1}-\tilde{c}^{n+1}+\overline{c}^{n+1}$.  
\end{proof}

Next, we show that the Lagrange multiplier in the DRLM scheme~\eqref{DRLM1} is uniformly bounded for any time step size $\tau>0$.

\begin{lemma}\label{range:q}
Let $\{q^n\}_{n=0}^N$ be the positive sequence generated by the DRLM scheme~\eqref{DRLM1}. There exists a constant $c_0\ge 1$, depending only on $T$, $\|\bm u_0\|_0$, and $\|\bm f\|_{L^{\infty}(0,T;\bm L^2(\Omega))}$, such that
\begin{align*}
0<q^n\le c_0,\quad n=0,1,\ldots,N.
\end{align*}
\end{lemma}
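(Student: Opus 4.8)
The plan is to derive a discrete energy identity directly from the scheme and then exploit the standing hypothesis $\theta\ge 1$ to extract a bound on $q^{n+1}$ that is independent of both $\theta$ and $\tau$. First I would test the momentum equation~\eqref{be1} in the $\bm L^2$ inner product against $\bm u^{n+1}$. Since $\nabla\cdot\bm u^{n+1}=0$ and $\bm u^{n+1}|_{\partial\Omega}=0$, the pressure contribution vanishes, $(\nabla p^{n+1},\bm u^{n+1})=0$, and I obtain
\[
\left(\frac{\bm u^{n+1}-\bm u^n}{\tau}+q^{n+1}(\bm u^n\cdot\nabla)\bm u^n,\bm u^{n+1}\right)=(\bm f(t_{n+1}),\bm u^{n+1})-\nu\|\nabla\bm u^{n+1}\|_0^2.
\]
The left-hand side here is precisely the right-hand side of the dynamic equation~\eqref{be3}; this is the crucial cancellation that lets the nonlinear convection term be eliminated exactly rather than estimated. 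Substituting into~\eqref{be3} and multiplying by $\tau$ gives the energy relation
\[
\mathcal{K}(\bm u^{n+1})+\theta(q^{n+1})^2-\mathcal{K}(\bm u^n)-\theta(q^n)^2=\tau(\bm f(t_{n+1}),\bm u^{n+1})-\tau\nu\|\nabla\bm u^{n+1}\|_0^2.
\]

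Next I would control the forcing term. Writing $C_P$ for the Poincar\'e constant on $\bm H_0^1(\Omega)$, Cauchy--Schwarz followed by Young's inequality yields $(\bm f(t_{n+1}),\bm u^{n+1})\le \frac{C_P^2}{2\nu}\|\bm f(t_{n+1})\|_0^2+\frac{\nu}{2}\|\nabla\bm u^{n+1}\|_0^2$, so that half of the dissipation is absorbed and
\[
\mathcal{K}(\bm u^{n+1})+\theta(q^{n+1})^2\le \mathcal{K}(\bm u^n)+\theta(q^n)^2+\frac{C_P^2\tau}{2\nu}\|\bm f(t_{n+1})\|_0^2.
\]
Summing this telescoping inequality over indices $0$ through $n$ and discarding $\mathcal{K}(\bm u^{n+1})\ge 0$, while using $(n+1)\tau\le T$ together with $\mathcal{K}(\bm u^0)=\tfrac12\|\bm u_0\|_0^2$ and $q^0=1$, leads to
\[
\theta(q^{n+1})^2\le \tfrac12\|\bm u_0\|_0^2+\theta+\frac{C_P^2T}{2\nu}\|\bm f\|_{L^{\infty}(0,T;\bm L^2(\Omega))}^2.
\]

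Finally, dividing by $\theta$ and invoking $\theta\ge 1$ (so that $1/\theta\le 1$) removes the dependence on the regularization parameter:
\[
(q^{n+1})^2\le 1+\tfrac12\|\bm u_0\|_0^2+\frac{C_P^2T}{2\nu}\|\bm f\|_{L^{\infty}(0,T;\bm L^2(\Omega))}^2=:c_0^2,
\]
which is the claimed bound, with $c_0\ge 1$ depending only on $T$, $\|\bm u_0\|_0$, and $\|\bm f\|_{L^{\infty}(0,T;\bm L^2(\Omega))}$ (the fixed quantities $\nu$ and $C_P$ being absorbed into the constant); the case $n=0$ is covered since $q^0=1\le c_0$. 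I do not anticipate a genuine obstacle here, as this is the standard discrete energy estimate. The only point requiring care is ensuring that the growth term on the right is independent of $\theta$, so that division by $\theta\ge 1$ produces a $\theta$-robust bound. This $\theta$-independence is exactly what makes the estimate usable in the later error analysis, where uniform velocity bounds that retain $\theta$ would be inadequate.
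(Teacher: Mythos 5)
Your derivation is internally correct: the energy identity obtained by testing \eqref{be1} with $\bm u^{n+1}$ and substituting into \eqref{be3} is exactly the paper's starting point (equations \eqref{rhs_be3}--\eqref{bound_q}), and the Poincar\'e--Young absorption, the telescoping sum, and the division by $\theta\ge 1$ are all valid, so the numerical solution does satisfy your final bound. However, your route does not prove the lemma \emph{as stated}. The lemma asserts that $c_0$ depends only on $T$, $\|\bm u_0\|_0$, and $\|\bm f\|_{L^{\infty}(0,T;\bm L^2(\Omega))}$, whereas your constant
\[
c_0^2=1+\tfrac12\|\bm u_0\|_0^2+\frac{C_P^2T}{2\nu}\|\bm f\|_{L^{\infty}(0,T;\bm L^2(\Omega))}^2
\]
also depends on the viscosity $\nu$ and on the Poincar\'e constant $C_P$ (hence on $\Omega$), and it blows up as $\nu\to 0$. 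This extra dependence is introduced at exactly one point: the step where you bound $(\bm f(t_{n+1}),\bm u^{n+1})$ by $\frac{C_P^2}{2\nu}\|\bm f(t_{n+1})\|_0^2+\frac{\nu}{2}\|\nabla\bm u^{n+1}\|_0^2$ in order to absorb the forcing into the dissipation.

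The paper's proof is structured precisely to avoid this. It discards the dissipation term $-\nu\|\nabla\bm u^{n+1}\|_0^2$ entirely and keeps the crude bound $(\bm f(t_{n+1}),\bm u^{n+1})\le c_{\bm f}\|\bm u^{n+1}\|_0$, which after summation leaves a term \emph{linear} in the unknown norms, $2c_{\bm f}\tau\sum_{i=0}^n\|\bm u^{i+1}\|_0$. This can no longer be handled by telescoping alone; the paper closes the estimate by introducing $\eta_{n+1}=\max_{1\le i\le n+1}\|\bm u^i\|_0$, deriving the quadratic inequality \eqref{bound_q_4}, $\eta_{n+1}^2\le \|\bm u_0\|_0^2+2\theta+2c_{\bm f}T\eta_{n+1}$, and bounding $\eta_{n+1}$ by the positive root. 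Dividing by $2\theta\ge 2$ then yields a $c_0$ uniform in $\nu$, $\Omega$, and $\theta$. So the trade-off is: your argument is shorter and more standard, but it buys this simplicity by surrendering exactly the property the lemma claims; the paper's max-sequence/quadratic-root argument is what purchases the $\nu$- and $\Omega$-independence. Within this paper your weaker version would in fact suffice downstream, since the constants in Theorems~\ref{thm:DRLM1} and~\ref{thm:pressure} are permitted to depend on $\nu$ and $\Omega$ anyway, but as a proof of the stated lemma it has a genuine gap, and the loss of robustness in the vanishing-viscosity regime is a qualitative, not cosmetic, difference.
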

\begin{proof}\hspace{0.1cm}
Taking the $\bm L^2$ inner product of~\eqref{be1} with $\bm u^{n+1}$ yields
\begin{align}\label{rhs_be3}
\left(\frac{\bm u^{n+1}-\bm u^n}{\tau}+q^{n+1}(\bm u^n\cdot\nabla)\bm u^n,\bm u^{n+1}\right)=-\nu\|\nabla\bm u^{n+1}\|_0^2+\left(\bm f(t_{n+1}),\bm u^{n+1}\right).
\end{align}
Let $c_{\bm f}=\|\bm f\|_{L^{\infty}(0,T;\bm L^2(\Omega))}$. From \eqref{be3} and \eqref{rhs_be3}, we obtain
\begin{align}\label{bound_q}
\begin{aligned}
\frac{\|\bm u^{n+1}\|_0^2-\|\bm u^n\|_0^2}{2\tau}+\theta\frac{(q^{n+1})^2-(q^n)^2}{\tau}&=-\nu\|\nabla\bm u^{n+1}\|_0^2+\left(\bm f(t_{n+1}),\bm u^{n+1}\right)\\
&\le \|\bm f(t_{n+1})\|_0\|\bm u^{n+1}\|_0\le c_{\bm f}\|\bm u^{n+1}\|_0.
\end{aligned}
\end{align}
Summing~\eqref{bound_q} over $i=0,1,\ldots,n$, noting that $q^0=1$, gives us
\begin{align*}
\frac{\|\bm u^{n+1}\|_0^2-\|\bm u^0\|_0^2}{2\tau}+\theta\frac{(q^{n+1})^2-1}{\tau}\le c_{\bm f}\sum_{i=0}^n\|\bm u^{i+1}\|_0,
\end{align*}
or equivalently,
\begin{align}\label{bound_q_2}
\|\bm u^{n+1}\|_0^2+2\theta(q^{n+1})^2\le \|\bm u^0\|_0^2+2\theta+2c_{\bm f}\tau\sum_{i=0}^n\|\bm u^{i+1}\|_0.
\end{align}
Let $\eta_n:=\max_{1\le i\le n}\|\bm u^i\|_0$ for $n\in\{1,2,\ldots,N\}$. It follows from~\eqref{bound_q_2} that
\begin{align}\label{bound_q_3}
\|\bm u^{n+1}\|_0^2+2\theta(q^{n+1})^2\le \|\bm u^0\|_0^2+2\theta+2c_{\bm f}T \eta_{n+1}.
\end{align}
Since $\{\eta_n\}$ is a non-decreasing sequence, it is implied from~\eqref{bound_q_3} for $0\le i\le n$ that
\begin{align*}
\|\bm u^{i+1}\|_0^2\le \|\bm u^0\|_0^2+2\theta+2c_{\bm f}T \eta_{i+1}\le \|\bm u^0\|_0^2+2\theta+2c_{\bm f}T \eta_{n+1},
\end{align*}
which leads to 
\begin{align}\label{bound_q_4}
\eta_{n+1}^2\le \|\bm u^0\|_0^2+2\theta+2c_{\bm f}T \eta_{n+1}.
\end{align}
Note that the quadratic equation $\eta_{n+1}^2=\|\bm u^0\|_0^2+2\theta+2c_{\bm f}T \eta_{n+1}$ has two distinct solutions of opposite signs. Therefore, $\eta_{n+1}$ satisfying \eqref{bound_q_4} is bounded by the positive solution, i.e.,
\begin{align}\label{bound_q_5}
\eta_{n+1}\le c_{\bm f}T+\sqrt{(c_{\bm f}T)^2+\|\bm u^0\|_0^2+2\theta}\le 2c_{\bm f}T+\|\bm u^0\|_0+\sqrt{2\theta}.
\end{align}
The combination of~\eqref{bound_q_3} and \eqref{bound_q_5} results in
\begin{align}
\|\bm u^{n+1}\|_0^2+2\theta(q^{n+1})^2\le \|\bm u^0\|_0^2+2\theta+2c_{\bm f}T(2c_{\bm f}T+\|\bm u^0\|_0+\sqrt{2\theta}).
\end{align}
In particular, noting that $\theta\ge 1$, we have
\begin{align*}
(q^{n+1})^2&\le 1+\frac{\|\bm u^0\|_0^2+2c_{\bm f}T(2c_{\bm f}T+\|\bm u^0\|_0+\sqrt{2\theta})}{2\theta} \le 1+\frac{\|\bm u^0\|_0^2+2c_{\bm f}T(2c_{\bm f}T+\|\bm u^0\|_0+\sqrt{2})}{2}.
\end{align*}
The proof  is thus completed.
\end{proof}

\section{Error analysis}\label{sec:analysis}
Let $\{\bm u^n,p^n,q^n\}$ be the numerical solution at time $t_n$ obtained by the DRLM scheme~\eqref{DRLM1}, we define the errors of the velocity, pressure, and Lagrange multiplier as follows: 
\begin{align*}
\begin{aligned}
e_{\bm u}^{n}&=\bm u^{n}-\bm u(t_{n}), \; \, e_p^{n}=p^n-p(t_n), \; \text{and} \; \, e_q^n=q^n-1.
\end{aligned}
\end{align*}
In particular, we have $e_{\bm u}^0=\bm 0$ and $e_q^0=0$. Before stating the main results of this work, let us first recall some useful inequalities.

\subsection{Preliminary inequalities}
Since the homogeneous Dirichlet boundary condition is imposed for the velocity field,  we invoke the Poincar\'{e} inequality and obtain, for some constant $c_1$ depending only on the domain $\Omega$:
\begin{align}
\|\bm v\|_1&\le (1+c_1)\|\nabla\bm v\|_0,\quad \forall\,\bm v\in\bm H_0^1(\Omega).\label{poincare:1}
\end{align}

For $\bm u,\bm v,\bm w\in\bm H_0^1(\Omega)$, we define the trilinear form $b(\cdot,\cdot,\cdot)$ by
\begin{align*}
b(\bm u,\bm v,\bm w)=\int_{\Omega}\left((\bm u\cdot\nabla)\bm v\right)\cdot\bm w\,d\bm x.
\end{align*}
It can be shown that $b$ is skew-symmetric with respect to its last two arguments, i.e., 
\begin{align}\label{b:skew}
b(\bm u,\bm v,\bm w)=-b(\bm u,\bm w,\bm v),\quad \forall\,\bm u\in \bm V,\;\forall\, \bm v,\bm w\in \bm H_0^1(\Omega).
\end{align}
By setting $\bm w=\bm v$ in~\eqref{b:skew}, we have $b(\bm u,\bm v,\bm v)=0,\;\forall\, \bm u\in \bm V,\;\forall\, \bm v\in \bm H_0^1(\Omega)$. Furthermore, for $d\le 3$, there exists a constant $c_2$ depending on $\Omega$ such that (cf.~\cite{Shen92,Temam95,Tone04,Boyer13,L-Shen22}) \vspace{-0.2cm}
\begin{align}\label{b:original}
    |b(\bm u,\bm v,\bm w)|&\le c_2\begin{cases}
        \|\bm u\|_1\|\bm v\|_1\|\bm w\|_1,\\ 
        \|\bm u\|_0\|\bm v\|_2\|\bm w\|_1,\\ 
        \|\bm u\|_1\|\bm v\|_2\|\bm w\|_0,\\ 
        \|\bm u\|_0\|\bm v\|_1\|\bm w\|_2,\\ 
        \|\bm u\|_2\|\bm v\|_1\|\bm w\|_0.\\ 
    \end{cases}
    \end{align}

A discrete version of Gronwall's inequality (see, for example, \cite{Diegel17,He-Sun07,Qua08}) is stated below:
\begin{lemma}\label{lemma:Gronwall}
Let $\{\tilde a_n\}_{n=0}^K$, $\{\tilde b_n\}_{n=1}^K$, $\{\tilde c_n\}_{n=1}^K$, and $\{\omega_n\}_{n=0}^K$ be nonnegative sequences with $\omega_0 \tilde a_0+\tilde c_1\le \tilde c_1e^{\omega_0}$. If $\{\tilde c_n\}$ is non-decreasing and   \vspace{-0.2cm}
\begin{align*}
\tilde a_n+\tilde b_n\le \tilde c_n+\sum_{i=0}^{n-1}\omega_i \tilde a_i,\quad \forall\, 1\le n\le K,  \vspace{-0.2cm}
\end{align*}
then we have  \vspace{-0.2cm}
\begin{align}\label{gronwall:KL}
\tilde a_n+\tilde b_n\le \tilde c_n\mathrm{exp}\left(\sum_{i=0}^{n-1}\omega_i\right),\quad \forall\,1\le n\le K.  \vspace{-0.2cm}
\end{align}
\end{lemma}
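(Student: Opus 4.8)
The plan is to establish the sharpened bound by a single induction on $n$ applied directly to the full quantity $\tilde a_n + \tilde b_n$, from which the statement follows at once. Throughout I would write $T_n := \sum_{i=0}^{n-1}\omega_i$, so that $T_0 = 0$, $T_n = T_{n-1} + \omega_{n-1}$, and the target inequality \eqref{gronwall:KL} reads $\tilde a_n + \tilde b_n \le \tilde c_n e^{T_n}$. Since every $\omega_i \ge 0$, the sequence $\{T_n\}$ is non-decreasing, and since $\{\tilde c_n\}$ is non-decreasing we have $\tilde c_i \le \tilde c_n$ whenever $i \le n$; both monotonicity facts will be used freely.

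First I would record the elementary estimate $1 + x \le e^{x}$ for $x \ge 0$ and use it to prove the summation bound
\begin{equation}\label{plan:tele}
e^{\omega_0} + \sum_{i=1}^{m}\omega_i e^{T_i} \le e^{T_{m+1}}, \qquad m \ge 0.
\end{equation}
This follows by a short induction on $m$: the case $m=0$ is the equality $e^{\omega_0} = e^{T_1}$, and the inductive step adds $\omega_m e^{T_m}$ to both sides of \eqref{plan:tele} at level $m-1$ and invokes $(1+\omega_m)e^{T_m} \le e^{\omega_m}e^{T_m} = e^{T_{m+1}}$. Estimate \eqref{plan:tele} is the key device that collapses the weighted sum $\sum \omega_i \tilde a_i$ into a single exponential factor.

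Then I would run the main induction on $n$ for the claim $\tilde a_n + \tilde b_n \le \tilde c_n e^{T_n}$. For the base case $n=1$ the hypothesis gives $\tilde a_1 + \tilde b_1 \le \tilde c_1 + \omega_0 \tilde a_0$, and the assumed compatibility condition $\omega_0 \tilde a_0 + \tilde c_1 \le \tilde c_1 e^{\omega_0}$ converts this exactly into $\tilde a_1 + \tilde b_1 \le \tilde c_1 e^{T_1}$. For the inductive step, assuming the claim for all indices $m \le n-1$ and using $\tilde b_m \ge 0$ to extract $\tilde a_m \le \tilde c_m e^{T_m}$, I would estimate the right-hand side of the hypothesis as
\begin{equation*}
\tilde a_n + \tilde b_n \le \tilde c_n + \omega_0 \tilde a_0 + \sum_{i=1}^{n-1}\omega_i \tilde a_i \le \tilde c_n + \tilde c_n(e^{\omega_0} - 1) + \tilde c_n\sum_{i=1}^{n-1}\omega_i e^{T_i},
\end{equation*}
where the $i=0$ term is controlled by rewriting the compatibility condition as $\omega_0 \tilde a_0 \le \tilde c_1(e^{\omega_0}-1) \le \tilde c_n(e^{\omega_0}-1)$ and the remaining terms by $\tilde a_i \le \tilde c_i e^{T_i} \le \tilde c_n e^{T_i}$. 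Applying \eqref{plan:tele} with $m = n-1$ collapses the bracket $e^{\omega_0} + \sum_{i=1}^{n-1}\omega_i e^{T_i}$ to $e^{T_n}$ and closes the induction.

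The step I expect to require the most care is the treatment of the boundary term $\omega_0 \tilde a_0$. Because the sum in the hypothesis begins at $i=0$ while the recursion only controls $\tilde a_i$ for $i \ge 1$, the datum $\tilde a_0$ is never bounded by the iteration itself; this is precisely the role of the nonstandard assumption $\omega_0 \tilde a_0 + \tilde c_1 \le \tilde c_1 e^{\omega_0}$, which simultaneously furnishes the base case and absorbs the $i=0$ contribution into the $\tilde c_n e^{\omega_0}$ term so that \eqref{plan:tele} applies cleanly. Everything else reduces to the routine $1+x \le e^x$ telescoping.
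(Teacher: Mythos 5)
Your proof is correct. The telescoping estimate $e^{\omega_0}+\sum_{i=1}^{m}\omega_i e^{T_i}\le e^{T_{m+1}}$ follows by induction from $1+\omega_m\le e^{\omega_m}$; the base case $n=1$ is exactly the compatibility condition; and in the inductive step the monotonicity of $\{\tilde c_n\}$ together with $\tilde b_i\ge 0$ lets you replace each $\tilde a_i$ ($i\ge 1$) by $\tilde c_n e^{T_i}$ and the boundary term $\omega_0\tilde a_0$ by $\tilde c_n(e^{\omega_0}-1)$, after which the telescoping estimate collapses the bracket to $\tilde c_n e^{T_n}$ and closes the induction.

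Note, however, that the paper does not prove this lemma at all: it is quoted as a known discrete Gronwall inequality with citations to the literature, followed only by a remark that the hypothesis $\omega_0\tilde a_0+\tilde c_1\le \tilde c_1 e^{\omega_0}$ is needed to make \eqref{gronwall:KL} hold at $n=1$, and that it is satisfied whenever $\tilde a_0\le\tilde c_1$. So there is no paper argument to compare against; what you supply is a self-contained elementary derivation. Your handling of the $i=0$ term is in fact slightly sharper than the paper's remark suggests: since the recursion never bounds $\tilde a_0$ itself, the compatibility condition is used not only to launch the induction at $n=1$ but also, rewritten as $\omega_0\tilde a_0\le\tilde c_1(e^{\omega_0}-1)\le\tilde c_n(e^{\omega_0}-1)$, to absorb the uncontrolled datum $\tilde a_0$ at every subsequent step $n\ge 2$, where it again appears in the sum $\sum_{i=0}^{n-1}\omega_i\tilde a_i$. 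Your proof makes this second, equally essential role of the hypothesis explicit, which the paper's remark leaves implicit.
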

We note that the condition $\omega_0 \tilde a_0+\tilde c_1\le \tilde c_1e^{\omega_0}$ in Lemma~\ref{lemma:Gronwall} is needed to ensure~\eqref{gronwall:KL} holds for $n=1$. This condition is satisfied if $\tilde a_0\le \tilde c_1$.

\subsection{Optimal error estimates for the velocity and Lagrange multiplier}
In what follows, we assume that the solution $(\bm u,p)$ to the NS equations~\eqref{PDE} satisfies the following regularity conditions:  \vspace{-0.2cm}
\begin{align}\label{regularity:must}
\begin{aligned}
&\bm u\in L^{\infty}(0,T;\bm H^2(\Omega)),\; \bm u_t\in L^2(0,T;\bm H^1(\Omega)),\\
&\bm u_{tt}\in L^2(0,T;\bm H^{-1}(\Omega)),\; p\in L^2(0,T;H^1(\Omega)/\mathbb{R}).  \vspace{-0.2cm}
\end{aligned}
\end{align}
Let $M$ be a positive constant such that  \vspace{-0.2cm}
\begin{align}\label{M:velo}
\max\left\{\sup_{t\in [0,T]}\|\bm u\|_2,\;\int_0^T\|\bm u_t\|_1^2\,dt,\;\int_0^T\|\bm u_{tt}\|_{-1}^2\,dt\right\}\le M.   \vspace{-0.2cm}
\end{align}

\begin{theorem}\label{thm:DRLM1}
Let $\{\bm u^n\}$, $\{p^n\}$, and $\{q^n\}$ be generated by the first-order DRLM scheme~\eqref{DRLM1}. There exist positive constants $\tau_0$, $C_1$, and $C_2$ depending on $\Omega$, $T$, $\nu$, $M$, $\bm u_0$, and $\bm f$ but independent of $\tau$, $\theta$, and $n$ such that the following error estimates hold for all $\tau\le \tau_0$:   
\begin{align*}
&\|e_{\bm u}^{n+1}\|_0^2+\sum_{i=0}^n\|e_{\bm u}^{i+1}-e_{\bm u}^i\|_0^2+\tau\nu\sum_{i=0}^n\|\nabla e_{\bm u}^{i+1}\|_0^2\le C_1\left(1+\frac{1}{\theta}\right)\tau^2,\quad 0\le n\le N-1,  \vspace{-0.2cm}
\end{align*}
and  \vspace{-0.2cm}
\begin{align*}
|e_q^n|\le \frac{C_2}{\theta}\tau,\quad 0\le n\le N.  \vspace{-0.2cm}
\end{align*}
\end{theorem}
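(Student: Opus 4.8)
The plan is to prove both estimates simultaneously by induction on $n$, coupling a velocity energy argument with a telescoped identity for the Lagrange multiplier and closing with the discrete Gronwall inequality (Lemma~\ref{lemma:Gronwall}). First I would insert the exact solution $(\bm u(t_{n+1}),p(t_{n+1}),1)$ into the momentum scheme~\eqref{be1} and record the two consistency residuals $\bm R_1^{n+1}=\frac{\bm u(t_{n+1})-\bm u(t_n)}{\tau}-\bm u_t(t_{n+1})$ (backward Euler) and $\bm R_2^{n+1}=(\bm u(t_n)\cdot\nabla)\bm u(t_n)-(\bm u(t_{n+1})\cdot\nabla)\bm u(t_{n+1})$ (explicit convection). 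Writing them in integral form and using~\eqref{regularity:must}--\eqref{M:velo}, both pair with $e_{\bm u}^{n+1}$ to contribute only $\theta$-independent terms of size $O(\tau^2)$ after summation. Subtracting~\eqref{be1} from this consistency identity and substituting $q^{n+1}=1+e_q^{n+1}$, $\bm u^n=\bm u(t_n)+e_{\bm u}^n$, then testing with $\bm v=e_{\bm u}^{n+1}\in\bm V$ (so the pressure drops) gives
\begin{align*}
&\frac{1}{2\tau}\left(\|e_{\bm u}^{n+1}\|_0^2-\|e_{\bm u}^n\|_0^2+\|e_{\bm u}^{n+1}-e_{\bm u}^n\|_0^2\right)+\nu\|\nabla e_{\bm u}^{n+1}\|_0^2\\
&\quad=-e_q^{n+1}b(\bm u^n,\bm u^n,e_{\bm u}^{n+1})-b(e_{\bm u}^n,\bm u^n,e_{\bm u}^{n+1})-b(\bm u(t_n),e_{\bm u}^n,e_{\bm u}^{n+1})-\left(\bm R_1^{n+1}+\bm R_2^{n+1},e_{\bm u}^{n+1}\right).
\end{align*}

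For the Lagrange multiplier I would combine the $\bm L^2$ inner product of~\eqref{be1} with $\bm u^{n+1}$ and the dynamic equation~\eqref{be3} to cancel the kinetic-energy increment, obtaining the identity $\theta[(q^{n+1})^2-(q^n)^2]=\tfrac12\|\bm u^{n+1}-\bm u^n\|_0^2+\tau q^{n+1}b(\bm u^n,\bm u^n,\bm u^{n+1})$. Since $b(\bm u^n,\bm u^n,\bm u^n)=0$ by~\eqref{b:skew}, the last term equals $\tau q^{n+1}b(\bm u^n,\bm u^n,\bm u^{n+1}-\bm u^n)$; summing over $i=0,\dots,n$ with $q^0=1$ and factoring $q^{n+1}+1>1$ yields
\begin{align*}
|e_q^{n+1}|\le\frac{1}{\theta}\left(\frac12\sum_{i=0}^n\|\bm u^{i+1}-\bm u^i\|_0^2+\tau\sum_{i=0}^n q^{i+1}\left|b(\bm u^i,\bm u^i,\bm u^{i+1}-\bm u^i)\right|\right).
\end{align*}
Splitting $\bm u^{i+1}-\bm u^i=(e_{\bm u}^{i+1}-e_{\bm u}^i)+(\bm u(t_{i+1})-\bm u(t_i))$ and $\bm u^i=\bm u(t_i)+e_{\bm u}^i$, the leading contribution is the exact-solution piece, bounded via~\eqref{b:original} and $q^{i+1}\le c_0$ (Lemma~\ref{range:q}) by $c_2 c_0 M^2\tau\int_0^T\|\bm u_t\|_0\,ds=O(\tau)$; the remaining pieces carry extra error factors and are higher order. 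This delivers the $O(\tau/\theta)$ bound and, crucially, the $1/\theta$ scaling.

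The device for $\theta$-robustness is to always substitute $\bm u^n=\bm u(t_n)+e_{\bm u}^n$ \emph{inside} each trilinear form and choose the line of~\eqref{b:original} so the exact factor carries the $\bm H^2$ norm ($\le M$) while the error factors appear only in $\bm L^2$ and $\bm H^1$; this avoids both the $\theta$-dependent velocity bounds of Lemma~\ref{range:q} and any $\bm H^2$ norm of the error. Concretely $b(e_{\bm u}^n,\bm u(t_n),e_{\bm u}^{n+1})$ and $b(\bm u(t_n),e_{\bm u}^n,e_{\bm u}^{n+1})$ are controlled (lines 3 and 5 of~\eqref{b:original}) by $c_2M(1+c_1)\|\nabla e_{\bm u}^n\|_0\|e_{\bm u}^{n+1}\|_0$, and the coupling term by $|e_q^{n+1}|\,c_2M^2\|e_{\bm u}^{n+1}\|_0$; Young's inequality with Poincar\'e~\eqref{poincare:1} puts a small multiple of $\nu\|\nabla e_{\bm u}^{n+1}\|_0^2$ on the left and leaves $\|e_{\bm u}^{n+1}\|_0^2$, a shiftable $\|\nabla e_{\bm u}^n\|_0^2$, $|e_q^{n+1}|^2$, and the $O(\tau^2)$ consistency terms on the right. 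Cubic remainders such as $b(e_{\bm u}^n,e_{\bm u}^n,e_{\bm u}^{n+1})$ are absorbed using the smallness $\|\nabla e_{\bm u}^n\|_0=O(\sqrt\tau)$ furnished by the induction hypothesis. Telescoping in $n$ and applying Lemma~\ref{lemma:Gronwall} with $\omega_i=O(\tau)$ then yields the velocity estimate, the $1/\theta$ arising solely from $|e_q^{n+1}|^2\le\tau^2/\theta^2\le\tau^2/\theta$ (as $\theta\ge1$); the seed condition is satisfied since $e_{\bm u}^0=\bm 0$.

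The main obstacle I anticipate is the intra-step coupling: at level $n+1$ the velocity estimate needs $|e_q^{n+1}|$, yet the identity for $e_q^{n+1}$ itself involves $\bm u^{n+1}$ through the $i=n$ term $\tfrac12\|\bm u^{n+1}-\bm u^n\|_0^2$ and $b(\bm u^n,\bm u^n,\bm u^{n+1}-\bm u^n)$, so the two current-level unknowns are genuinely entangled and no available framework handles this directly. I expect to resolve it by observing that the $e_{\bm u}^{n+1}$-dependence of $e_q^{n+1}$ enters only through quadratically small quantities, so after Young's inequality the corresponding products are higher order and absorbable, leaving the leading $O(\tau/\theta)$ part of $e_q^{n+1}$ (from the exact-solution and consistency terms) to drive the coupling term. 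A second difficulty, sharper in three dimensions, is maintaining the uniform $\theta$-independent bound $\|\bm u^n\|_1\le M(1+c_1)+1$ required by the explicit-convection estimates; this is exactly what the inductive velocity bound supplies via $\|\nabla e_{\bm u}^n\|_0=O(\sqrt\tau)$, which is the reason the argument must proceed by induction rather than by a single energy estimate.
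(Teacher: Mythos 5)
Your proposal is correct in outline and shares the paper's overall scaffolding (energy estimate for $e_{\bm u}^{n+1}$ with exact/error splitting of the convection terms so that $\bm H^2$ norms fall only on the exact solution, a telescoped identity for the multiplier derived from~\eqref{be3}, absorption of the $\tau$-weighted current-level coupling terms, discrete Gronwall via Lemma~\ref{lemma:Gronwall}, and an induction to control the cubic remainder), but it takes a genuinely different route at the key step of bounding $e_q^{n+1}$. You keep the convection term in the telescoped identity $\theta[(q^{n+1})^2-(q^n)^2]=\tfrac12\|\bm u^{n+1}-\bm u^n\|_0^2+\tau q^{n+1}b(\bm u^n,\bm u^n,\bm u^{n+1}-\bm u^n)$ and estimate the trilinear form directly; the paper instead eliminates it exactly by taking the $\bm L^2$ product of~\eqref{be1} with $\bm u^{n+1}-\bm u^n$ and substituting into~\eqref{be3}, so that the $q$-dynamics~\eqref{q:rewrite} involve only $(\bm f(t_{n+1}),\bm u^{n+1}-\bm u^n)$, $\|\bm u^{n+1}-\bm u^n\|_0^2$, and viscous terms, and it then controls these by a second, increment-tested energy estimate (the error equation~\eqref{err_eq_u} paired with $e_{\bm u}^{n+1}-e_{\bm u}^n$, i.e.\ the bound on $S^{n+1}$ in Step~3). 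The paper's cancellation buys that no trilinear form on the numerical velocity ever enters the $q$-estimate, hence no cross terms and no use of the induction hypothesis inside the $q$-bound, and a clean control of exactly the increment quantities $\sum_i\|e_{\bm u}^{i+1}-e_{\bm u}^i\|_0^2$ and $\sum_i\|\nabla(e_{\bm u}^{i+1}-e_{\bm u}^i)\|_0^2$ that your route also needs; the price is the extra Step-3 estimate, the $\max\{|e_q^{n+1}|^2,|e_q^{n+1}|\}$ case analysis, and a two-stage derivation of the $q$-rate (Gronwall on the combined variable $\|e_{\bm u}^n\|_0^2+\theta|e_q^n|^2$ first gives only $|e_q^n|\lesssim\tau/\sqrt{\theta}$, upgraded to $\tau/\theta$ at the end via~\eqref{for_later}). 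Your direct estimate buys a one-pass $\mathcal{O}(\tau/\theta)$ bound without the increment-tested equation, but it concentrates the difficulty in the absorption you only sketch: inserting your $e_q^{n+1}$ bound into the velocity recursion produces squares of sums such as $\tau^2\theta^{-2}\bigl(\sum_{i\le n}\|e_{\bm u}^{i+1}-e_{\bm u}^i\|_0^2\bigr)^2$, and the cubic piece $b(e_{\bm u}^n,e_{\bm u}^n,\bm u^{n+1}-\bm u^n)$ forces the $\|\cdot\|_1\|\cdot\|_1\|\cdot\|_1$ line of~\eqref{b:original}, which drags $\|\nabla e_{\bm u}^{n+1}\|_0$ into the $q$-bound; these are indeed absorbable under an $\mathcal{O}(1)$ step restriction using $\theta\ge 1$ and the inductive a priori bounds, but this is the one place where your argument is materially more delicate than the paper's and should be carried out explicitly rather than by appeal to ``higher order.'' One small inaccuracy: your multiplier identity follows from~\eqref{be3} alone (by expanding $(\bm u^{n+1}-\bm u^n,\bm u^{n+1})$); testing~\eqref{be1} with $\bm u^{n+1}$ is not what produces it — that substitution, performed with $\bm u^{n+1}-\bm u^n$ instead, is precisely the paper's device and yields the convection-free identity.
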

\begin{proof}\hspace{0.1cm} 
To derive  optimal error estimates for both the velocity and the Lagrange multiplier, we perform the following steps. A velocity estimate is first obtained based on its error equation (integrated against $e_{\bm u}^{n+1}$) and the use of the viscosity term to control the truncation error and nonlinear convection terms. Since the resulting estimate involves the Lagrange multiplier error, we bound it in terms of the velocity error. This is a challenging task due to the presence of both $q^{n+1}$ and $(q^{n+1})^2$ in the DRLM scheme and requires combining the dynamic equation for $q$ with the momentum equation and the velocity error equation, tested with $\bm u^{n+1}-\bm u^n$ and $e_{\bm u}^{n+1}-e_{\bm u}^n$, respectively.  Once these estimates are established,  the discrete Gronwall's inequality is applied to both the velocity and the Lagrange multiplier, followed by an inductive argument to achieve optimal convergence rates of the concerned quantities under an $\mathcal{O}(1)$ time step size requirement.


\textbf{Step 1: Estimates for the velocity.} 
Subtracting the momentum equation~\eqref{PDE_1} at $t_{n+1}$ from~\eqref{be1}, we derive the error equation for the velocity and pressure:
\begin{align}\label{err_eq_u}
\frac{e_{\bm u}^{n+1}-e_{\bm u}^n}{\tau}-\nu\Delta e_{\bm u}^{n+1}+\nabla e_p^{n+1}=\bm R_{\bm u}^{n+1}+(\bm u(t_{n+1})\cdot\nabla)\bm u(t_{n+1})-q^{n+1}(\bm u^n\cdot\nabla)\bm u^n,
\end{align}
where the truncation error $\bm R_{\bm u}^{n+1}$ is given by  \vspace{-0.2cm}
\begin{align*}
\bm R_{\bm u}^{n+1}=\bm u_t(t_{n+1})-\frac{\bm u(t_{n+1})-\bm u(t_n)}{\tau}=\frac{1}{\tau}\int_{t_n}^{t_{n+1}}(t-t_n)\bm u_{tt}(t)\,dt.
\end{align*}
Integrating \eqref{err_eq_u} against $e_{\bm u}^{n+1}$ and noting that $(\nabla e_p^{n+1},e_{\bm u}^{n+1})=0$ due to $\nabla\cdot e_{\bm u}^{n+1}=0$ (cf. the incompressibility conditions~\eqref{PDE_new_2} and~\eqref{be2}), we find
\begin{align}\label{est:velo}
\begin{aligned}
&\frac{\|e_{\bm u}^{n+1}\|_0^2-\|e_{\bm u}^n\|_0^2}{2\tau}+\frac{\|e_{\bm u}^{n+1}-e_{\bm u}^n\|_0^2}{2\tau}+\nu\|\nabla e_{\bm u}^{n+1}\|_0^2\\
&\quad =(\bm R_{\bm u}^{n+1},e_{\bm u}^{n+1})+b(\bm u(t_{n+1}),\bm u(t_{n+1}),e_{\bm u}^{n+1})-q^{n+1}b(\bm u^n,\bm u^n,e_{\bm u}^{n+1}).
\end{aligned}
\end{align}
Since $\bm u(t_{n+1})\in \bm H_0^1(\Omega)$ and $\bm u^{n+1}\in \bm H_0^1(\Omega)$ by Lemma~\ref{lemma:!}, we have $e_{\bm u}^{n+1}\in \bm H_0^1(\Omega)$. Hence, the first term on the right-hand side of~\eqref{est:velo} can be bounded by
\begin{align}\label{est:Ru_}
(\bm R_{\bm u}^{n+1},e_{\bm u}^{n+1})\le \left\|\frac{1}{\tau}\int_{t_n}^{t_{n+1}}(t-t_n)\bm u_{tt}\,dt\right\|_{-1}\|e_{\bm u}^{n+1}\|_1\le \left(\int_{t_n}^{t_{n+1}}\|\bm u_{tt}\|_{-1}dt\right)\|e_{\bm u}^{n+1}\|_1.
\end{align}
The last two terms on the right-hand side of~\eqref{est:velo} can be split as follows
\begin{align}\label{B1234}
b(\bm u(t_{n+1}),\bm u(t_{n+1}),e_{\bm u}^{n+1})-q^{n+1}b(\bm u^n,\bm u^n,e_{\bm u}^{n+1})=B_1+B_2+B_3+B_4,
\end{align}
where
\begin{align*}
B_1&=b(\bm u(t_{n+1})-\bm u(t_n),\bm u(t_{n+1}),e_{\bm u}^{n+1})+b(\bm u(t_n),\bm u(t_{n+1})-\bm u(t_n),e_{\bm u}^{n+1}),\\
B_2&=-q^{n+1}b(e_{\bm u}^n,\bm u(t_n),e_{\bm u}^{n+1})-q^{n+1}b(\bm u(t_n),e_{\bm u}^n,e_{\bm u}^{n+1}),\\
B_3&=-q^{n+1}b(e_{\bm u}^n,e_{\bm u}^n,e_{\bm u}^{n+1}),\\
B_4&=-e_q^{n+1}b(\bm u(t_n),\bm u(t_n),e_{\bm u}^{n+1}).
\end{align*}
It follows from properties of the operator  $b$ in~\eqref{b:skew}-\eqref{b:original} and the regularity assumption~\eqref{M:velo} that
\begin{align}\label{est:B1_}
\begin{aligned}
B_1&=b(\bm u(t_{n+1})-\bm u(t_n),\bm u(t_{n+1}),e_{\bm u}^{n+1})-b(\bm u(t_n),e_{\bm u}^{n+1},\bm u(t_{n+1})-\bm u(t_n)) \\
&\le c_2\|\bm u(t_{n+1})-\bm u(t_n)\|_0\|\bm u(t_{n+1})\|_2\|e_{\bm u}^{n+1}\|_1+c_2\|\bm u(t_n)\|_2\|e_{\bm u}^{n+1}\|_1\|\bm u(t_{n+1})-\bm u(t_n)\|_0 \\
&\le 2c_2M\|\bm u(t_{n+1})-\bm u(t_n)\|_0\|e_{\bm u}^{n+1}\|_1\\
&\le 2c_2M\left(\int_{t_n}^{t_{n+1}}\|\bm u_t\|_0\,dt\right)\|e_{\bm u}^{n+1}\|_1.
\end{aligned}
\end{align}
Similarly, noting that $0<q^{n+1}\le c_0$ from Lemma~\ref{range:q}, we obtain
\begin{align}
&\begin{aligned}
B_2&=-q^{n+1}b(e_{\bm u}^n,\bm u(t_n),e_{\bm u}^{n+1})+q^{n+1}b(\bm u(t_n),e_{\bm u}^{n+1},e_{\bm u}^n)\\
&\le c_0c_2\|e_{\bm u}^n\|_0\|\bm u(t_n)\|_2\|e_{\bm u}^{n+1}\|_1+c_0c_2\|\bm u(t_n)\|_2\|e_{\bm u}^{n+1}\|_1\|e_{\bm u}^n\|_0\\
&\le 2c_0c_2M\|e_{\bm u}^n\|_0\|e_{\bm u}^{n+1}\|_1,
\end{aligned} \label{est:B2_} \vspace{0.4cm}\\
&\begin{aligned}
B_3&=-q^{n+1}b(e_{\bm u}^n,e_{\bm u}^n,e_{\bm u}^{n+1})\le c_0c_2\|e_{\bm u}^n\|_1\|e_{\bm u}^n\|_1\|e_{\bm u}^{n+1}\|_1=c_0c_2\|e_{\bm u}^n\|_1^2\|e_{\bm u}^{n+1}\|_1. \vspace{-0.2cm}
\end{aligned} \label{est:B3_}
\end{align}
We remark that the estimation of $B_3$ involves $\|e_{\bm u}^n\|_1^2$, which eventually becomes $\|e_{\bm u}^n\|_1^4$ after the use of Young's inequality. Alternatively, one may estimate $B_3$ using a stronger inequality $B_3\le c_0c_2\|e_{\bm u}^n\|_0^{1/2}\|e_{\bm u}^n\|_1^{1/2}\|e_{\bm u}^n\|_0^{1/2}\|e_{\bm u}^n\|_1^{1/2}\|e_{\bm u}^{n+1}\|_1$, which simplifies to $B_3\le  c_0c_2\|e_{\bm u}^n\|_0\|e_{\bm u}^n\|_1\|e_{\bm u}^{n+1}\|_1$  and \eqref{est:B3_} follows. Although this estimate avoids the difficulty associated with the quartic term $\|e_{\bm u}^n\|_1^4$, it is only valid in two dimensions ($d=2$), as shown in~\cite{L-Shen22} (where the study is restricted to the 2D setting). To ensure the analysis in the current paper applies to both two and three dimensions, we still use~\eqref{est:B3_} for estimating $B_3$.  Now for $B_4$,  it can be bounded by
\begin{align}\label{est:B4_}
\begin{aligned}
B_4&=-e_q^{n+1}b(\bm u(t_n),\bm u(t_n),e_{\bm u}^{n+1})\le c_2|e_q^{n+1}|\|\bm u(t_n)\|_2\|\bm u(t_n)\|_1\|e_{\bm u}^{n+1}\|_0\le c_2M^2|e_q^{n+1}|\|e_{\bm u}^{n+1}\|_0. 
\end{aligned}
\end{align}
In addition to $B_3$, the estimate for $B_4$ introduces another challenge to the analysis due to the product of implicit error terms, i.e., $|e_q^{n+1}|\|e_{\bm u}^{n+1}\|_0$. We remark that for the first-order SAV scheme in~\cite{L-Shen22}, a term similar to $B_4$ (involving both $e_{\bm u}^{n+1}$ and $e_{SAV}^{n+1}$) can be easily handled by invoking the dynamic equation for the auxiliary variable. However, applying the same approach to $B_4$ in our case leads to suboptimal error estimates for both the velocity and the Lagrange multiplier.  \vspace{0.15cm}

\noindent The combination of \eqref{est:velo}-\eqref{est:B4_} results in 
\begin{align}\label{est:u}
\begin{aligned}
&\frac{\|e_{\bm u}^{n+1}\|_0^2-\|e_{\bm u}^n\|_0^2}{2\tau}+\frac{\|e_{\bm u}^{n+1}-e_{\bm u}^n\|_0^2}{2\tau}+\nu\|\nabla e_{\bm u}^{n+1}\|_0^2\\
&\quad\le \left(\int_{t_n}^{t_{n+1}}\|\bm u_{tt}\|_{-1}\,dt+2c_2M\int_{t_n}^{t_{n+1}}\|\bm u_t\|_0\,dt+2c_0c_2M\|e_{\bm u}^n\|_0+c_0c_2\|e_{\bm u}^n\|_1^2\right)\|e_{\bm u}^{n+1}\|_1\\
&\qquad+c_2M^2|e_q^{n+1}|\|e_{\bm u}^{n+1}\|_0.
\end{aligned}
\end{align}
By the Young's and Cauchy-Schwarz inequalities, noting that $\|e_{\bm u}^{n+1}\|_1\le (1+c_1)\|\nabla e_{\bm u}^{n+1}\|_0$ due to~\eqref{poincare:1}, the first term on the right-hand side of~\eqref{est:u} is bounded by  \vspace{-0.2cm}
\begin{align}\label{est:u_rhs}
\begin{aligned}
&\left(\int_{t_n}^{t_{n+1}}\|\bm u_{tt}\|_{-1}\,dt+2c_2M\int_{t_n}^{t_{n+1}}\|\bm u_t\|_0\,dt+2c_0c_2M\|e_{\bm u}^n\|_0+c_0c_2\|e_{\bm u}^n\|_1^2\right)\|e_{\bm u}^{n+1}\|_1\\
&\quad\le \frac{(1+c_1)^2}{2\nu}\left(\int_{t_n}^{t_{n+1}}\|\bm u_{tt}\|_{-1}\,dt+2c_2M\int_{t_n}^{t_{n+1}}\|\bm u_t\|_0\,dt+2c_0c_2M\|e_{\bm u}^n\|_0+c_0c_2\|e_{\bm u}^n\|_1^2\right)^2\\
&\qquad+\frac{\nu}{2(1+c_1)^2}\|e_{\bm u}^{n+1}\|_1^2\\
&\quad\le \frac{(1+c_1)^2}{2\nu}\left(1+4c_2^2M^2+4c_0^2c_2^2M^2+c_0^2c_2^2\right)\bigg[\left(\int_{t_n}^{t_{n+1}}\|\bm u_{tt}\|_{-1}\,dt\right)^2+\left(\int_{t_n}^{t_{n+1}}\|\bm u_t\|_0\,dt\right)^2\\
&\qquad+\|e_{\bm u}^n\|_0^2+\|e_{\bm u}^n\|_1^4\bigg]+\frac{\nu}{2}\|\nabla e_{\bm u}^{n+1}\|_0^2\\
&\quad\le c_3\left(\tau\int_{t_n}^{t_{n+1}}\|\bm u_{tt}\|_{-1}^2\,dt+\tau\int_{t_n}^{t_{n+1}}\|\bm u_t\|_0^2\,dt+\|e_{\bm u}^n\|_0^2+\|e_{\bm u}^n\|_1^4\right)+\frac{\nu}{2}\|\nabla e_{\bm u}^{n+1}\|_0^2,
\end{aligned}
\end{align}
where
$c_3=\frac{(1+c_1)^2}{2\nu}\left(1+4c_2^2M^2+4c_0^2c_2^2M^2+c_0^2c_2^2\right).$
It follows from~\eqref{est:u} and~\eqref{est:u_rhs} that  \vspace{-0.1cm}
\begin{align}\label{est:u_compact}
\begin{aligned}
&\frac{\|e_{\bm u}^{n+1}\|_0^2-\|e_{\bm u}^n\|_0^2}{2\tau}+\frac{\|e_{\bm u}^{n+1}-e_{\bm u}^n\|_0^2}{2\tau}+\frac{\nu}{2}\|\nabla e_{\bm u}^{n+1}\|_0^2\\
&\quad\le c_3\left(\tau\int_{t_n}^{t_{n+1}}\|\bm u_{tt}\|_{-1}^2\,dt+\tau\int_{t_n}^{t_{n+1}}\|\bm u_t\|_0^2\,dt+\|e_{\bm u}^n\|_0^2+\|e_{\bm u}^n\|_1^4\right)+c_2M^2|e_q^{n+1}|\|e_{\bm u}^{n+1}\|_0.
\end{aligned}
\end{align}
Summing~\eqref{est:u_compact} over $i=0,1,\ldots,n$ and multiplying the resulting estimate by $2\tau$, we deduce that  \vspace{-0.2cm}
\begin{align}\label{u:before_Gronwall}
\begin{aligned}
&\|e_{\bm u}^{n+1}\|_0^2+\sum_{i=0}^n\|e_{\bm u}^{i+1}-e_{\bm u}^i\|_0^2+\tau\nu\sum_{i=0}^n\|\nabla e_{\bm u}^{i+1}\|_0^2\\
&\quad\le 2c_3\tau\left(\tau\int_0^T\|\bm u_{tt}\|_{-1}^2\,dt+\tau\int_0^T\|\bm u_t\|_0^2\,dt+\sum_{i=0}^n\|e_{\bm u}^i\|_0^2+\sum_{i=0}^n\|e_{\bm u}^i\|_1^4\right)\\
&\qquad+2c_2M^2\tau\sum_{i=0}^n |e_q^{i+1}|\|e_{\bm u}^{i+1}\|_0\\
&\quad\le 2c_3\tau\left(2M\tau+\sum_{i=0}^n\|e_{\bm u}^i\|_0^2+\sum_{i=0}^n\|e_{\bm u}^i\|_1^4\right)+2c_2M^2\tau\sum_{i=0}^n |e_q^{i+1}|\|e_{\bm u}^{i+1}\|_0,
\end{aligned}
\end{align}
where we have used the regularity assumption~\eqref{M:velo} in the last estimate. By Young's inequality, the last summation on the right-hand side of~\eqref{u:before_Gronwall} is bounded by
\begin{align}\label{BDT:Cauchy}
\begin{aligned}
&2c_2M^2\tau\sum_{i=0}^n |e_q^{i+1}|\|e_{\bm u}^{i+1}\|_0=2c_2M^2\tau|e_q^{n+1}|\|e_{\bm u}^{n+1}\|_0+2c_2M^2\tau\sum_{i=0}^n |e_q^{i}|\|e_{\bm u}^{i}\|_0\\
&\qquad\le \frac 12\|e_{\bm u}^{n+1}\|_0^2+2c_2^2M^4\tau^2|e_q^{n+1}|^2+c_2M^2\tau\sum_{i=0}^n|e_q^i|^2+c_2M^2\tau\sum_{i=0}^n\|e_{\bm u}^{i}\|_0^2.
\end{aligned}
\end{align}
From~\eqref{u:before_Gronwall} and~\eqref{BDT:Cauchy}, we arrive at
\begin{align}\label{BDT:uq}
\begin{aligned}
&\frac 12\|e_{\bm u}^{n+1}\|_0^2+\sum_{i=0}^n\|e_{\bm u}^{i+1}-e_{\bm u}^i\|_0^2+\tau\nu\sum_{i=0}^n\|\nabla e_{\bm u}^{i+1}\|_0^2\\
&\quad\le 4c_3M\tau^2+(2c_3+c_2M^2)\tau\sum_{i=0}^n\|e_{\bm u}^i\|_0^2+2c_3\tau\sum_{i=0}^n\|e_{\bm u}^i\|_1^4+c_2M^2\tau\sum_{i=0}^n|e_q^i|^2+2c_2^2M^4\tau^2|e_q^{n+1}|^2.
\end{aligned}
\end{align}

\textbf{Step 2: Estimates for the Lagrange multiplier.} Since the right-hand side of~\eqref{BDT:uq} involves the error term $|e_q^{n+1}|^2$ of the Lagrange multiplier, we proceed by estimating this term based on the dynamic equation~\eqref{be3}. This can be achieved by bounding $\left|(q^{n+1})^2-1\right|$, noting that the exact value of $q$ is 1 and $q^0=1$. Observe that~\eqref{be3} can be written as follows
\begin{align}\label{q:info_1}
\theta\frac{(q^{n+1})^2-(q^n)^2}{\tau}&=\frac{\|\bm u^{n+1}-\bm u^n\|_0^2}{2\tau}+q^{n+1}b(\bm u^n,\bm u^n,\bm u^{n+1}).
\end{align}
Taking the $\bm L^2$ inner product of \eqref{be1} with $\bm u^{n+1}-\bm u^n$ and using the properties $b(\bm u^n,\bm u^n,\bm u^n)=0$ and $(\nabla p^{n+1},\bm u^{n+1}-\bm u^n)=0$, we find
\begin{align}\label{q:info_2}
\begin{aligned}
&\frac{\|\bm u^{n+1}-\bm u^n\|_0^2}{\tau}+\frac{\nu}{2}\left(\|\nabla\bm u^{n+1}\|_0^2-\|\nabla\bm u^n\|_0^2+\|\nabla(\bm u^{n+1}-\bm u^n)\|_0^2\right)+q^{n+1}b(\bm u^n,\bm u^n,\bm u^{n+1})\\
&\qquad\qquad=\left(\bm f(t_{n+1}),\bm u^{n+1}-\bm u^n\right).
\end{aligned}
\end{align}
It follows from \eqref{q:info_1} and \eqref{q:info_2} that
\begin{align}\label{q:rewrite}
\begin{aligned}
\theta\frac{(q^{n+1})^2-(q^n)^2}{\tau}=&\left(\bm f(t_{n+1}),\bm u^{n+1}-\bm u^n\right)-\frac{\|\bm u^{n+1}-\bm u^n\|_0^2}{2\tau}\\
&-\frac{\nu}{2}\left(\|\nabla\bm u^{n+1}\|_0^2-\|\nabla\bm u^n\|_0^2+\|\nabla(\bm u^{n+1}-\bm u^n)\|_0^2\right).
\end{aligned}
\end{align}
Notice that $q^0=1$, the sum of \eqref{q:rewrite} over $i=0,1,\ldots,n$ leads to
\begin{align}\label{q:err_eqn}
\begin{aligned}
\theta\frac{(q^{n+1})^2-1}{\tau}=&\sum_{i=0}^n\left(\bm f(t_{i+1}),\bm u^{i+1}-\bm u^i\right)-\frac{1}{2\tau}\sum_{i=0}^n\|\bm u^{i+1}-\bm u^i\|_0^2\\
&-\frac{\nu}{2}(\|\nabla\bm u^{n+1}\|_0^2-\|\nabla\bm u^0\|_0^2)-\frac{\nu}{2}\sum_{i=0}^n\|\nabla(\bm u^{i+1}-\bm u^i)\|_0^2.
\end{aligned}
\end{align}
Recalling $c_{\bm f}=\|\bm f\|_{L^{\infty}(0,T;\bm L^2(\Omega))}$,  the first term on the right-hand side of~\eqref{q:err_eqn} can be bounded by
\begin{align}\label{q:rhs1}
\begin{aligned}
\left|\sum_{i=0}^n\left(\bm f(t_{i+1}),\bm u^{i+1}-\bm u^i\right)\right|&\le \sum_{i=0}^n\|\bm f(t_{i+1})\|_0\|\bm u^{i+1}-\bm u^i\|_0\\
&\le \sum_{i=0}^n c_{\bm f}\sqrt{\tau}\cdot\frac{1}{\sqrt{\tau}}\|\bm u^{i+1}-\bm u^i\|_0\\
&\le \frac 12\sum_{i=0}^nc_{\bm f}^2\tau+\frac{1}{2\tau}\sum_{i=0}^n\|\bm u^{i+1}-\bm u^i\|_0^2\\
&\le \frac 12c_{\bm f}^2T+\frac{1}{2\tau}\sum_{i=0}^n\|\bm u^{i+1}-\bm u^i\|_0^2.
\end{aligned}
\end{align}
Observing that $
\bm u^{i+1}-\bm u^i=(\bm u(t_{i+1})-\bm u(t_i))+(e_{\bm u}^{i+1}-e_{\bm u}^i)$ and using the inequality $\|\bm u+\bm v\|_0^2\le 2(\|\bm u\|_0^2+\|\bm v\|_0^2)$, we can estimate the second term on the right-hand side of~\eqref{q:err_eqn} (as well as~\eqref{q:rhs1}) as follows
\begin{align}\label{q:rhs2}
\begin{aligned}
\frac{1}{2\tau}\sum_{i=0}^n\|\bm u^{i+1}-\bm u^i\|_0^2&\le \frac{1}{\tau}\sum_{i=0}^n\|\bm u(t_{i+1})-\bm u(t_i)\|_0^2+\frac{1}{\tau}\sum_{i=0}^n\|e_{\bm u}^{i+1}-e_{\bm u}^i\|_0^2\\
&\le \frac{1}{\tau}\sum_{i=0}^n\left(\int_{t_i}^{t_{i+1}}\|\bm u_t\|_0\,dt\right)^2+\frac{1}{\tau}\sum_{i=0}^n\|e_{\bm u}^{i+1}-e_{\bm u}^i\|_0^2\\
&\le \int_0^T\|\bm u_t\|_0^2\,dt+\frac{1}{\tau}\sum_{i=0}^n\|e_{\bm u}^{i+1}-e_{\bm u}^i\|_0^2\\
&\le M+\frac{1}{\tau}\sum_{i=0}^n\|e_{\bm u}^{i+1}-e_{\bm u}^i\|_0^2.
\end{aligned}
\end{align}
Similarly, for the term $\frac{\nu}{2}\|\nabla\bm u^{n+1}\|_0^2$ and the last summation on the right-hand side of~\eqref{q:err_eqn}, we have
\begin{align}\label{q:rhs3}
\begin{aligned}
\frac{\nu}{2}\|\nabla\bm u^{n+1}\|_0^2&\le \nu\|\nabla\bm u(t_{n+1})\|_0^2+\nu\|\nabla e_{\bm u}^{n+1}\|_0^2\\
&\le \nu M^2+\nu\|\nabla e_{\bm u}^{n+1}\|_0^2,
\end{aligned}
\end{align}
and
\begin{align}\label{q:rhs4}
\begin{aligned}
\frac{\nu}{2}\sum_{i=0}^n\|\nabla(\bm u^{i+1}-\bm u^i)\|_0^2&\le \nu\sum_{i=0}^n\|\nabla(\bm u(t_{i+1})-\bm u(t_i))\|_0^2+ \nu\sum_{i=0}^n\|\nabla(e_{\bm u}^{i+1}-e_{\bm u}^i)\|_0^2\\
&\le \nu\tau\int_0^T\|\nabla\bm u_t\|_0^2\,dt+\nu\sum_{i=0}^n\|\nabla(e_{\bm u}^{i+1}-e_{\bm u}^i)\|_0^2\\
&\le \nu M\tau+\nu\sum_{i=0}^n\|\nabla(e_{\bm u}^{i+1}-e_{\bm u}^i)\|_0^2.
\end{aligned}
\end{align}
Combining~\eqref{q:err_eqn}-\eqref{q:rhs4}, we obtain
\begin{align}\label{q:end}
\begin{aligned}
\theta\frac{|(q^{n+1})^2-1|}{\tau}&\le \left|\sum_{i=0}^n\left(\bm f(t_{i+1}),\bm u^{i+1}-\bm u^i\right)\right|+ \frac{1}{2\tau}\sum_{i=0}^n\|\bm u^{i+1}-\bm u^i\|_0^2\\
&\qquad+\frac{\nu}{2}\left(\|\nabla\bm u^{n+1}\|_0^2+\|\nabla\bm u^0\|_0^2\right)+\frac{\nu}{2}\sum_{i=0}^n\|\nabla(\bm u^{i+1}-\bm u^i)\|_0^2\\
&\le \frac 12c_{\bm f}^2T+2M+\nu M^2+\frac{\nu}{2}\|\nabla\bm u^0\|_0^2+\nu M\tau\\
&\qquad+\frac{2}{\tau}\sum_{i=0}^n\|e_{\bm u}^{i+1}-e_{\bm u}^i\|_0^2+\nu\|\nabla e_{\bm u}^{n+1}\|_0^2+\nu\sum_{i=0}^n\|\nabla(e_{\bm u}^{i+1}-e_{\bm u}^i)\|_0^2.
\end{aligned}
\end{align}
Notice that $e_q^{n+1}=q^{n+1}-1\ge -1$, which implies
\begin{align}\label{BDT:key}
\begin{aligned}
\theta\frac{|(q^{n+1})^2-1|}{\tau}&=\theta\frac{|(e_q^{n+1}+1)^2-1|}{\tau}\\
&=\theta\frac{|(e_q^{n+1})^2+2e_q^{n+1}|}{\tau}\\
&\ge\theta\frac{\max\{|e_q^{n+1}|^2,|e_q^{n+1}|\}}{\tau}.
\end{aligned}
\end{align}
In the last estimate of~\eqref{BDT:key}, we have used the fact that $|x+2|\ge \max\{|x|,1\}$ for all $x\ge -1$. From~\eqref{q:end} and~\eqref{BDT:key}, we deduce
\begin{align}\label{BDT_max}
\theta\frac{\max\{|e_q^{n+1}|^2,|e_q^{n+1}|\}}{\tau}\le c_4+\nu M\tau+S^{n+1},
\end{align}
where $c_4=\frac 12c_{\bm f}^2T+2M+\nu M^2+\frac{\nu}{2}\|\nabla\bm u^0\|_0^2$ and 
\begin{align*}
S^{n+1}&=\frac{2}{\tau}\sum_{i=0}^n\|e_{\bm u}^{i+1}-e_{\bm u}^i\|_0^2+\nu\|\nabla e_{\bm u}^{n+1}\|_0^2+\nu\sum_{i=0}^n\|\nabla(e_{\bm u}^{i+1}-e_{\bm u}^i)\|_0^2.
\end{align*}

\textbf{Step 3: Estimates for $S^{n+1}$.} 
By taking the $\bm L^2$ inner product of \eqref{err_eq_u} with $e_{\bm u}^{n+1}-e_{\bm u}^n$ and using similar arguments as in the estimates~\eqref{est:Ru_}-\eqref{est:u_rhs}, we obtain
\begin{align*} 
\begin{aligned}
&\frac{\|e_{\bm u}^{n+1}-e_{\bm u}^n\|_0^2}{\tau}+\frac{\nu}{2}\left(\|\nabla e_{\bm u}^{n+1}\|_0^2-\|\nabla e_{\bm u}^n\|_0^2+\|\nabla(e_{\bm u}^{n+1}-e_{\bm u}^n)\|_0^2\right)\\
&\quad=(\bm R_{\bm u}^{n+1},e_{\bm u}^{n+1}-e_{\bm u}^n)+b(\bm u(t_{n+1}),\bm u(t_{n+1}),e_{\bm u}^{n+1}-e_{\bm u}^n)-q^{n+1}b(\bm u^n,\bm u^n,e_{\bm u}^{n+1}-e_{\bm u}^n)\\
&\quad\le\left(\int_{t_n}^{t_{n+1}}\|\bm u_{tt}\|_{-1}\,dt+2c_2M\int_{t_n}^{t_{n+1}}\|\bm u_t\|_0\,dt+2c_0c_2M\|e_{\bm u}^n\|_0+c_0c_2\|e_{\bm u}^n\|_1^2\right)\|e_{\bm u}^{n+1}-e_{\bm u}^n\|_1\\
&\qquad+c_2M^2|e_q^{n+1}|\|e_{\bm u}^{n+1}-e_{\bm u}^n\|_0\\
&\quad\le 2c_3\left(\tau\int_{t_n}^{t_{n+1}}\|\bm u_{tt}\|_{-1}^2\,dt+\tau\int_{t_n}^{t_{n+1}}\|\bm u_t\|_0^2\,dt+\|e_{\bm u}^n\|_0^2+\|e_{\bm u}^n\|_1^4\right)+\frac{\nu}{4}\|\nabla(e_{\bm u}^{n+1}-e_{\bm u}^n)\|_0^2\\
&\qquad+\frac{\|e_{\bm u}^{n+1}-e_{\bm u}^n\|_0^2}{2\tau}+\frac{c_2^2M^4\tau}{2}|e_q^{n+1}|^2,
\end{aligned}
\end{align*}
which implies that
\begin{align}\label{u:other}
\begin{aligned}
&\frac{\|e_{\bm u}^{n+1}-e_{\bm u}^n\|_0^2}{2\tau}+\frac{\nu}{2}\left(\|\nabla e_{\bm u}^{n+1}\|_0^2-\|\nabla e_{\bm u}^n\|_0^2\right)+\frac{\nu}{4}\|\nabla(e_{\bm u}^{n+1}-e_{\bm u}^n)\|_0^2\\
&\quad\le 2c_3\left(\tau\int_{t_n}^{t_{n+1}}\|\bm u_{tt}\|_{-1}^2\,dt+\tau\int_{t_n}^{t_{n+1}}\|\bm u_t\|_0^2\,dt+\|e_{\bm u}^n\|_0^2+\|e_{\bm u}^n\|_1^4\right)+\frac{c_2^2M^4\tau}{2}|e_q^{n+1}|^2.
\end{aligned}
\end{align}
The sum of \eqref{u:other} over $i=0,1,\ldots,n$ gives us
\begin{align}\label{u:key1}
\begin{aligned}
&\frac{1}{2\tau}\sum_{i=0}^n\|e_{\bm u}^{i+1}-e_{\bm u}^i\|_0^2+\frac{\nu}{2}\|\nabla e_{\bm u}^{n+1}\|_0^2+\frac{\nu}{4}\sum_{i=0}^n\|\nabla(e_{\bm u}^{i+1}-e_{\bm u}^i)\|_0^2\\
&\quad\le 2c_3\left(2M\tau+\sum_{i=0}^n\|e_{\bm u}^i\|_0^2+\sum_{i=0}^n\|e_{\bm u}^i\|_1^4\right)+\frac{c_2^2M^4\tau}{2}\sum_{i=0}^n|e_q^{i+1}|^2.
\end{aligned}
\end{align}
By multiplying both sides of~\eqref{u:key1} by 4, we can bound $S^{n+1}$ as follows
\begin{align}\label{u:key2}
\begin{aligned}
S^{n+1}&\le\frac{2}{\tau}\sum_{i=0}^n\|e_{\bm u}^{i+1}-e_{\bm u}^i\|_0^2+2\nu\|\nabla e_{\bm u}^{n+1}\|_0^2+\nu\sum_{i=0}^n\|\nabla(e_{\bm u}^{i+1}-e_{\bm u}^i)\|_0^2\\
&\le 8c_3\left(2M\tau+\sum_{i=0}^n\|e_{\bm u}^i\|_0^2+\sum_{i=0}^n\|e_{\bm u}^i\|_1^4\right)+2c_2^2M^4\tau\sum_{i=0}^n|e_q^{i+1}|^2.
\end{aligned}
\end{align}

\textbf{Step 4: Simultaneous estimates for the velocity and Lagrange multiplier.} In light of Step 2 and Step 3, let us first simplify the estimate for $\theta|e_q^{n+1}|^2$. From~\eqref{BDT_max} and~\eqref{u:key2}, we find
\begin{align}\label{for_later}
\begin{aligned}
&\theta\frac{\max\{|e_q^{n+1}|^2,|e_q^{n+1}|\}}{\tau}\le c_4+\nu M\tau+S^{n+1}\\
&\quad\le c_4+(16c_3+\nu)M\tau+ 8c_3\sum_{i=0}^n\|e_{\bm u}^i\|_0^2+8c_3\sum_{i=0}^n\|e_{\bm u}^i\|_1^4+2c_2^2M^4\tau\sum_{i=0}^n|e_q^{i+1}|^2.
\end{aligned}
\end{align}
Since $\frac{x+y}{2}\le \max\{x,y\}$ for all $x,y\in\mathbb{R}$, multiplying~\eqref{for_later} by $2\tau$ yields
\begin{align}\label{q:c3c4c5}
\begin{aligned}
\theta|e_q^{n+1}|^2+\theta|e_q^{n+1}|&\le 2c_4\tau+(32c_3+2\nu)M\tau^2+ 16c_3\tau\sum_{i=0}^n\|e_{\bm u}^i\|_0^2\\
&\quad+16c_3\tau\sum_{i=0}^n\|e_{\bm u}^i\|_1^4+4c_2^2M^4\tau^2\sum_{i=0}^n|e_q^{i+1}|^2.
\end{aligned}
\end{align}
If $\theta|e_q^{n+1}|>2c_4\tau$ then~\eqref{q:c3c4c5} reduces to 
\begin{align*}
\theta|e_q^{n+1}|^2\le (32c_3+2\nu)M\tau^2+ 16c_3\tau\sum_{i=0}^n\|e_{\bm u}^i\|_0^2+16c_3\tau\sum_{i=0}^n\|e_{\bm u}^i\|_1^4+4c_2^2M^4\tau^2\sum_{i=0}^n|e_q^{i+1}|^2.
\end{align*}
If $\theta|e_q^{n+1}|\le 2c_4\tau$ then $\theta|e_q^{n+1}|^2\le \frac{4c_4^2}{\theta}\tau^2$. In both cases, we have  \vspace{-0.2cm}
\begin{align}\label{BDT:qu}
\begin{aligned}
\theta|e_q^{n+1}|^2\le \left(32c_3M+2\nu M+\frac{4c_4^2}{\theta}\right)\tau^2+ 16c_3\tau\sum_{i=0}^n\|e_{\bm u}^i\|_0^2
+16c_3\tau\sum_{i=0}^n\|e_{\bm u}^i\|_1^4+4c_2^2M^4\tau^2\sum_{i=0}^n|e_q^{i+1}|^2.
\end{aligned}
\end{align}
Taking the sum of~\eqref{BDT:uq} and~\eqref{BDT:qu}, we obtain  \vspace{-0.2cm}
\begin{align}\label{combine_uq}
\begin{aligned}
&\frac 12\|e_{\bm u}^{n+1}\|_0^2+\sum_{i=0}^n\|e_{\bm u}^{i+1}-e_{\bm u}^i\|_0^2+\tau\nu\sum_{i=0}^n\|\nabla e_{\bm u}^{i+1}\|_0^2+\theta|e_q^{n+1}|^2\\
&\quad\le \left(36c_3M+2\nu M+\frac{4c_4^2}{\theta}\right)\tau^2+(18c_3+c_2M^2)\tau\sum_{i=0}^n\|e_{\bm u}^i\|_0^2+18c_3\tau\sum_{i=0}^n\|e_{\bm u}^i\|_1^4\\
&\qquad+(c_2M^2+4c_2^2M^4\tau)\tau\sum_{i=0}^n|e_q^i|^2+6c_2^2M^4\tau^2|e_q^{n+1}|^2.
\end{aligned}
\end{align}
If $\tau$ satisfies $6c_2^2M^4\tau^2\le \frac{\theta}{2}$, or   \vspace{-0.2cm}
\begin{align}\label{CFL}
 \tau\le \frac{\sqrt{\theta}}{2\sqrt{3}c_2M^2},  
\end{align}
then multiplying~\eqref{combine_uq} by 2 yields  \vspace{-0.2cm}
\begin{align}\label{combine_uq_new}
\begin{aligned}
&\|e_{\bm u}^{n+1}\|_0^2+2\sum_{i=0}^n\|e_{\bm u}^{i+1}-e_{\bm u}^i\|_0^2+2\tau\nu\sum_{i=0}^n\|\nabla e_{\bm u}^{i+1}\|_0^2+\theta|e_q^{n+1}|^2\\
&\quad\le \left(72c_3M+4\nu M+\frac{8c_4^2}{\theta}\right)\tau^2+(36c_3+2c_2M^2)\tau\sum_{i=0}^n\|e_{\bm u}^i\|_0^2+36c_3\tau\sum_{i=0}^n\|e_{\bm u}^i\|_1^4\\
&\qquad+(2c_2M^2+8c_2^2M^4\tau)\tau\sum_{i=0}^n|e_q^i|^2\\
&\quad\le \max\left\{36c_3+2c_2M^2,\frac{2c_2M^2+8c_2^2M^4\tau}{\theta}\right\}\tau\sum_{i=0}^n\left(\|e_{\bm u}^{i}\|_0^2+\theta|e_q^i|^2\right)\\
&\qquad +\left(72c_3M+4\nu M+\frac{8c_4^2}{\theta}\right)\tau^2+36c_3\tau\sum_{i=0}^n\|e_{\bm u}^i\|_1^4.
\end{aligned}
\end{align}
From \eqref{CFL} and the fact $\theta\ge 1$, we know that  \vspace{-0.2cm}
\begin{align*}
\frac{2c_2M^2+8c_2^2M^4\tau}{\theta}&=2c_2M^2\frac{1+4c_2M^2\tau}{\theta}\le 2c_2M^2\frac{1+\frac{2\sqrt{\theta}}{\sqrt{3}}}{\theta}\le 2c_2M^2\left(1+\frac{2}{\sqrt{3}}\right)\le 5c_2M^2.
\end{align*}
Therefore, \eqref{combine_uq_new} can be simplified as  \vspace{-0.2cm}
\begin{align}\label{combine_uq_new!}
\begin{aligned}
&\|e_{\bm u}^{n+1}\|_0^2+2\sum_{i=0}^n\|e_{\bm u}^{i+1}-e_{\bm u}^i\|_0^2+2\tau\nu\sum_{i=0}^n\|\nabla e_{\bm u}^{i+1}\|_0^2+\theta|e_q^{n+1}|^2\\
&\le (36c_3+5c_2M^2)\tau\sum_{i=0}^n\left(\|e_{\bm u}^{i}\|_0^2+\theta|e_q^i|^2\right)+\left(72c_3M+4\nu M+\frac{8c_4^2}{\theta}\right)\tau^2+36c_3\tau\sum_{i=0}^n\|e_{\bm u}^i\|_1^4.
\end{aligned}
\end{align}
Applying the discrete Gronwall's inequality in Lemma~\ref{lemma:Gronwall} to~\eqref{combine_uq_new!}, with $\tilde a_n=\|e_{\bm u}^{n}\|_0^2+\theta|e_q^n|^2$ for $0\le n\le N$, we obtain
\begin{align}\label{result_1}
\begin{aligned}
&\|e_{\bm u}^{n+1}\|_0^2+2\sum_{i=0}^n\|e_{\bm u}^{i+1}-e_{\bm u}^i\|_0^2+2\tau\nu\sum_{i=0}^n\|\nabla e_{\bm u}^{i+1}\|_0^2+\theta|e_q^{n+1}|^2\\
&\quad\le \left[\left(72c_3M+4\nu M+\frac{8c_4^2}{\theta}\right)\tau^2+36c_3\tau\sum_{i=0}^n\|e_{\bm u}^i\|_1^4\right]\mathrm{exp}\left(T(36c_3+5c_2M^2)\right)\\
&\quad\le\left(2c_5M+\frac{c_6}{\theta}\right)\tau^2+c_5\tau\sum_{i=0}^n\|e_{\bm u}^i\|_1^4,
\end{aligned}
\end{align}
where $c_5=\left(36c_3+2\nu\right)\mathrm{exp}\left(T(36c_3+5c_2M^2)\right)$ and  $c_6=8c_4^2\,\mathrm{exp}\left(T(36c_3+5c_2M^2)\right)$.

\textbf{Step 5: Estimates for $\sum\limits_{i=0}^n\|e_{\bm u}^i\|_1^4$ by induction.}
It follows from~\eqref{result_1} and Poincaré inequality~\eqref{poincare:1} that
\begin{align*}
\frac{2\tau\nu}{(1+c_1)^2}\sum_{i=0}^{n}\|e_{\bm u}^{i+1}\|_1^2&\le 2\tau\nu\sum_{i=0}^n\|\nabla e_{\bm u}^{i+1}\|_0^2\le \left(2c_5M+\frac{c_6}{\theta}\right)\tau^2+c_5\tau\left(\sum_{i=0}^n\|e_{\bm u}^i\|_1^2\right)^2,
\end{align*}
which leads to
\begin{align}\label{for_induction}
\sum_{i=0}^{n+1}\|e_{\bm u}^i\|_1^2\le \frac{(1+c_1)^2}{2\nu}\left[2c_5M+\frac{c_6}{\theta}+\frac{c_5}{\tau}\left(\sum_{i=0}^n\|e_{\bm u}^i\|_1^2\right)^2\right]\tau.
\end{align}
Using~\eqref{for_induction}, we will prove by induction that, for sufficiently small $\tau$, the following holds for $0\le n\le N$:
\begin{align}\label{induction}
\sum_{i=0}^n\|e_{\bm u}^i\|_1^2\le \frac{(1+c_1)^2}{2\nu}\left(2c_5M+\frac{c_6}{\theta}+c_5\right)\tau=:\alpha\tau.
\end{align}
Clearly, \eqref{induction} holds for $n=0$ since $e_{\bm u}^0=\bm 0$. Suppose it is true for some nonnegative integer $n<N$. In light of~\eqref{for_induction}, for $\tau$ satisfying 
\begin{align}\label{CFL_2}
 \tau\le\frac{1}{\alpha^2}=\frac{4\nu^2}{(1+c_1)^4\left(c_5+2c_5M+\dfrac{c_6}{\theta}\right)^2},
\end{align}
we find that
\begin{align*}
\sum_{i=0}^{n+1}\|e_{\bm u}^i\|_1^2&\le \frac{(1+c_1)^2}{2\nu}\left(2c_5M+\frac{c_6}{\theta}+c_5\alpha^2\tau\right)\tau\le \frac{(1+c_1)^2}{2\nu}\left(2c_5M+\frac{c_6}{\theta}+c_5\right)\tau=\alpha\tau,
\end{align*}
which verifies the statement~\eqref{induction}, provided that $\tau\le\frac{1}{\alpha^2}$. Therefore, we have the following estimate
\begin{align}\label{result_2}
\sum_{i=0}^N\|e_{\bm u}^i\|_1^4\le \left(\sum_{i=0}^N\|e_{\bm u}^i\|_1^2\right)^2\le \alpha^2\tau^2\le \tau.
\end{align}

\textbf{Step 6: Optimal convergence rates for the velocity and Lagrange multiplier.}
Combining~\eqref{result_1} and~\eqref{result_2}, we end up with 
\begin{align}\label{result_3}
\begin{aligned}
&\|e_{\bm u}^{n+1}\|_0^2+2\sum_{i=0}^n\|e_{\bm u}^{i+1}-e_{\bm u}^i\|_0^2+2\tau\nu\sum_{i=0}^n\|\nabla e_{\bm u}^{i+1}\|_0^2+\theta|e_q^{n+1}|^2\le \left(c_5+2c_5M+\frac{c_6}{\theta}\right)\tau^2,
\end{aligned}
\end{align}
provided that $\tau$ satisfies the following condition (cf. \eqref{CFL} and~\eqref{CFL_2} and the fact $\theta\ge 1$)
$$\tau\le \min\left\{\frac{1}{2\sqrt{3}c_2M^2},\frac{4\nu^2}{(1+c_1)^4\left(c_5+2c_5M+c_6\right)^2},T\right\}=:\tau_0.$$
Setting $c_7=\max\{c_5+2c_5M,c_6\}$, we obtain from~\eqref{result_3} that for $0\le n<N$,
\begin{align}
\|e_{\bm u}^{n+1}\|_0^2+2\sum_{i=0}^n\|e_{\bm u}^{i+1}-e_{\bm u}^i\|_0^2+2\tau\nu\sum_{i=0}^n\|\nabla e_{\bm u}^{i+1}\|_0^2&\le c_7\left(1+\frac{1}{\theta}\right)\tau^2, \label{result_4}  \vspace{-0.2cm}
\end{align}
and  \vspace{-0.2cm}
\begin{align}
|e_q^{n+1}|^2&\le c_7\left(\frac{1}{\theta}+\frac{1}{\theta^2}\right)\tau^2.\label{BDT_improve}  \vspace{-0.2cm}
\end{align}
In light of~\eqref{result_2},~\eqref{result_4}, and~\eqref{BDT_improve}, the bound for $|e_q^{n+1}|$ can be further improved (especially when $\theta\gg 1$) using~\eqref{for_later} as follows  \vspace{-0.2cm}
\begin{align*}
\theta\frac{|e_q^{n+1}|}{\tau}&\le c_4+(16c_3+\nu)M\tau+ 8c_3\sum_{i=0}^n\|e_{\bm u}^i\|_0^2+8c_3\sum_{i=0}^n\|e_{\bm u}^i\|_1^4+2c_2^2M^4\tau\sum_{i=0}^n|e_q^{i+1}|^2\\
&\le c_4+(16c_3+\nu)M\tau+8c_3c_7\left(1+\frac{1}{\theta}\right)T\tau+8c_3\tau+2c_2^2M^4c_7\left(\frac{1}{\theta}+\frac{1}{\theta^2}\right)T\tau^2\\
&\le c_4+c_8\tau+c_9\tau^2,
\end{align*}
where $c_8=(16c_3+\nu)M+16c_3c_7T+8c_3$ and $c_9=4c_2^2M^4c_7T$. This means that
$$|e_q^{n+1}|\le \frac{c_4+c_8\tau+c_9\tau^2}{\theta}\tau,$$
and the proof of Theorem~\ref{thm:DRLM1} is completed with $C_1=c_7$ and $C_2=c_4+c_8\tau_0+c_9\tau_0^2$.
\end{proof}

\begin{remark} It is necessary to follow Steps 2-6 in the proof above to bound the Lagrange multiplier error $e_q^{n+1}$ from both above and below,  especially the latter.  Note that an upper bound for $e_q^{n+1}$ can be easily obtained by combining~\eqref{q:err_eqn} and~\eqref{q:rhs1} which yields
\begin{align}\label{rm:upper_bound}
\begin{aligned}
\theta\frac{(q^{n+1})^2-1}{\tau}&\le \frac 12c_{\bm f}^2T+\frac{\nu}{2}\|\nabla\bm u^0\|_0^2-\frac{\nu}{2}\|\nabla\bm u^{n+1}\|_0^2-\frac{\nu}{2}\sum_{i=0}^n\|\nabla(\bm u^{i+1}-\bm u^i)\|_0^2\\
&\le \frac 12c_{\bm f}^2T+\frac{\nu}{2}\|\nabla\bm u^0\|_0^2=:\widehat{c}.
\end{aligned}
\end{align}
Recall that $q^{n+1}-1=e_q^{n+1}$ and $q^{n+1}+1>1$,~\eqref{rm:upper_bound} implies 
$$\theta\frac{e_q^{n+1}}{\tau}\le \widehat{c},\text{ or equivalently, } e_q^{n+1}\le \frac{\widehat{c}}{\theta}\tau.$$
\end{remark}

\subsection{Optimal error estimates for the pressure}
We begin with a key lemma on the discrete derivative of the Lagrange multiplier error, which is crucial for the pressure error estimate.  For any sequence $\{g^n\}_{n\ge 0}$, let us define
\begin{align*}
d_tg^{n+1}:=\frac{g^{n+1}-g^n}{\tau}, \quad \text{for any} \; n\ge 0.
\end{align*}
\begin{lemma}\label{lemma:dteq}
Under the regularity assumptions~\eqref{regularity:must} and $\bm u_t\in L^4(0,T;\bm L^2(\Omega))$, there exists a positive constant $C_3$ independent of $\tau$ and $\theta$ such that for all $\tau\le \min\{\tau_0,\frac{\theta}{2C_2}\}$, we have
\begin{align*}
\sum_{n=0}^{N-1}|d_te_q^{n+1}|^2\le \frac{C_3}{\theta^2}\tau,
\end{align*}
where the constants $\tau_0$ and $C_2$ are defined in Theorem~\ref{thm:DRLM1}.
\end{lemma}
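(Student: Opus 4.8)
The plan is to start from the rewritten dynamic equation~\eqref{q:rewrite}, whose left-hand side is $\theta\bigl((q^{n+1})^2-(q^n)^2\bigr)/\tau$; denote its right-hand side by $G^{n+1}$. Since $q^{n+1}-q^n=e_q^{n+1}-e_q^n$, the left-hand side factors as $\theta(q^{n+1}+q^n)\,d_te_q^{n+1}$. The key first observation is that $q^{n+1}+q^n$ is bounded below by a universal constant. Indeed, Theorem~\ref{thm:DRLM1} gives $|e_q^n|\le C_2\tau/\theta$, so for $\tau\le\theta/(2C_2)$ we have $|e_q^n|\le 1/2$, hence $q^n=1+e_q^n\ge 1/2$ and therefore $q^{n+1}+q^n\ge 1$. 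This yields the pointwise bound $|d_te_q^{n+1}|\le\theta^{-1}|G^{n+1}|$, so the lemma reduces to establishing $\sum_{n=0}^{N-1}|G^{n+1}|^2\le C_3\tau$ with $C_3$ independent of $\tau$ and $\theta$.

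Next I would split $G^{n+1}$ into its four constituents: the forcing term $(\bm f(t_{n+1}),\bm u^{n+1}-\bm u^n)$, the quadratic increment $\|\bm u^{n+1}-\bm u^n\|_0^2/(2\tau)$, the gradient energy jump $\tfrac{\nu}{2}\bigl(\|\nabla\bm u^{n+1}\|_0^2-\|\nabla\bm u^n\|_0^2\bigr)$, and the gradient increment $\tfrac{\nu}{2}\|\nabla(\bm u^{n+1}-\bm u^n)\|_0^2$. Throughout I would decompose $\bm u^{n+1}-\bm u^n=(\bm u(t_{n+1})-\bm u(t_n))+(e_{\bm u}^{n+1}-e_{\bm u}^n)$ and invoke the estimates already proved in Theorem~\ref{thm:DRLM1}: $\sum_i\|e_{\bm u}^{i+1}-e_{\bm u}^i\|_0^2\le C_1(1+\theta^{-1})\tau^2$, the smallness $\|\nabla e_{\bm u}^{n+1}\|_0^2=O(\tau)$, and (from Step~3) $\sum_i\|\nabla(e_{\bm u}^{i+1}-e_{\bm u}^i)\|_0^2=O(\tau)$. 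The forcing term, bounded by $c_{\bm f}\|\bm u^{n+1}-\bm u^n\|_0$, contributes $O(\tau)$ once one sums $\sum_i\|\bm u^{i+1}-\bm u^i\|_0^2=O(\tau)$. For the energy jump I would factor $\|\nabla\bm u^{n+1}\|_0^2-\|\nabla\bm u^n\|_0^2=(\nabla(\bm u^{n+1}-\bm u^n),\nabla(\bm u^{n+1}+\bm u^n))$ and use that $\|\nabla\bm u^n\|_0\le M+O(\sqrt\tau)$ is uniformly bounded (again via Theorem~\ref{thm:DRLM1}), reducing it to $\sum_i\|\nabla(\bm u^{i+1}-\bm u^i)\|_0^2=O(\tau)$; the gradient-increment term squared is even smaller, $O(\tau^2)$, thanks to the $L^2(0,T;\bm H^1)$ regularity of $\bm u_t$ combined with the smallness of $\sum_i\|\nabla(e_{\bm u}^{i+1}-e_{\bm u}^i)\|_0^2$.

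The main obstacle is the quadratic increment term, whose square equals $\|\bm u^{n+1}-\bm u^n\|_0^4/(4\tau^2)$; this is precisely where the extra hypothesis $\bm u_t\in L^4(0,T;\bm L^2(\Omega))$ is indispensable. For the exact part, the naive $L^2$-in-time bound $\|\bm u(t_{n+1})-\bm u(t_n)\|_0^2\le\tau\int_{t_n}^{t_{n+1}}\|\bm u_t\|_0^2\,dt$ only yields $\sum_n\|\bm u(t_{n+1})-\bm u(t_n)\|_0^4/\tau^2=O(1)$, which is insufficient. Instead I would apply Hölder's inequality in time to obtain $\|\bm u(t_{n+1})-\bm u(t_n)\|_0\le\tau^{3/4}\bigl(\int_{t_n}^{t_{n+1}}\|\bm u_t\|_0^4\,dt\bigr)^{1/4}$, so that $\|\bm u(t_{n+1})-\bm u(t_n)\|_0^4/\tau^2\le\tau\int_{t_n}^{t_{n+1}}\|\bm u_t\|_0^4\,dt$, and summing gives $\tau\int_0^T\|\bm u_t\|_0^4\,dt=O(\tau)$. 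For the error part, I would use $\|e_{\bm u}^{n+1}-e_{\bm u}^n\|_0^4\le\bigl(\sum_i\|e_{\bm u}^{i+1}-e_{\bm u}^i\|_0^2\bigr)\|e_{\bm u}^{n+1}-e_{\bm u}^n\|_0^2\le C_1(1+\theta^{-1})\tau^2\|e_{\bm u}^{n+1}-e_{\bm u}^n\|_0^2$, whence $\sum_n\|e_{\bm u}^{n+1}-e_{\bm u}^n\|_0^4/\tau^2=O(\tau^2)$. Collecting the four estimates gives $\sum_{n=0}^{N-1}|G^{n+1}|^2\le C_3\tau$, and combined with $|d_te_q^{n+1}|\le\theta^{-1}|G^{n+1}|$ this completes the proof.
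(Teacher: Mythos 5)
Your proposal is correct and follows essentially the same route as the paper: both start from~\eqref{q:rewrite}, use Theorem~\ref{thm:DRLM1} with the restriction $\tau\le\frac{\theta}{2C_2}$ to get $q^{n+1}+q^n\ge 1$, decompose $\bm u^{n+1}-\bm u^n$ into exact and error parts, and invoke the $L^4(0,T;\bm L^2(\Omega))$ regularity of $\bm u_t$ in exactly the same place you do, namely to control the square of $\|\bm u^{n+1}-\bm u^n\|_0^2/\tau$ (the paper via Cauchy--Schwarz on $\left(\int_{t_n}^{t_{n+1}}\|\bm u_t\|_0^2\,dt\right)^2$, you via H\"older with exponents $4/3$ and $4$). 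The only cosmetic differences are that the paper keeps the gradient terms in the combined form $\nu(\nabla\bm u^{n+1},\nabla(\bm u^{n+1}-\bm u^n))$ rather than splitting the energy jump from the increment, and your appeal to Step~3 of the theorem's proof for $\sum_i\|\nabla(e_{\bm u}^{i+1}-e_{\bm u}^i)\|_0^2=O(\tau)$ is not even needed, since that bound already follows from the theorem's statement and the triangle inequality.
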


\begin{proof}\hspace{0.1cm} 
Let $\widetilde{M}$ be a constant such that $\displaystyle\int_{0}^{T}\|\bm u_t\|_0^4\,dt\le \widetilde{M}$.
Since $q^{n+1}-q^n=e_q^{n+1}-e_q^n$, it follows from~\eqref{q:rewrite} that
\begin{align}\label{dt:eq}
\theta(q^{n+1}+q^n)d_te_q^{n+1}=\left(\bm f(t_{n+1}),\bm u^{n+1}-\bm u^n\right)-\frac{\|\bm u^{n+1}-\bm u^n\|_0^2}{2\tau}-\nu(\nabla\bm u^{n+1},\nabla(\bm u^{n+1}-\bm u^n)).
\end{align}
Recalling that $c_{\bm f}=\|\bm f\|_{L^{\infty}(0,T;\bm L^2(\Omega))}$,
the first term on the right-hand side of~\eqref{dt:eq} is bounded by
\begin{align}\label{est:cf}
\begin{aligned}
\left|\left(\bm f(t_{n+1}),\bm u^{n+1}-\bm u^n\right)\right|&\le \|\bm f(t_{n+1})\|_0\|\bm u^{n+1}-\bm u^n\|_0\\
&\le c_{\bm f}\|\bm u^{n+1}-\bm u^n\|_0\\
&\le \frac 12c_{\bm f}^2\tau+\frac{1}{2\tau}\|\bm u^{n+1}-\bm u^n\|_0^2.
\end{aligned}
\end{align}
It is implied from~\eqref{dt:eq} and~\eqref{est:cf} that
\begin{align}\label{C3hat:1}
\theta(q^{n+1}+q^n)|d_te_q^{n+1}|\le \frac 12c_{\bm f}^2\tau+\frac{1}{\tau}\|\bm u^{n+1}-\bm u^n\|_0^2+\nu\|\nabla\bm u^{n+1}\|_0\|\nabla(\bm u^{n+1}-\bm u^n)\|_0.
\end{align}
To estimate the right-hand side of~\eqref{C3hat:1}, we observe from Theorem~\ref{thm:DRLM1} that
\begin{align}\label{C3hat:2}
\begin{aligned}
\|\bm u^{n+1}-\bm u^n\|_0^2&\le \left(\|e_{\bm u}^{n+1}-e_{\bm u}^{n}\|_0+\|\bm u(t_{n+1})-\bm u(t_n)\|_0\right)^2\\
&\le \left(\sqrt{2C_1}\,\tau+\int_{t_n}^{t_{n+1}}\|\bm u_t\|_0\,dt\right)^2\\
&\le (2C_1+1)\left(\tau^2+\tau\int_{t_n}^{t_{n+1}}\|\bm u_t\|_0^2\,dt\right),\\
\|\nabla\bm u^{n+1}\|_0&\le \|\nabla e_{\bm u}^{n+1}\|_0+\|\nabla\bm u(t_{n+1})\|_0\\
&\le \sqrt{\frac{2C_1\tau}{\nu}}+M\le \sqrt{\frac{2C_1T}{\nu}}+M,\\
\|\nabla(\bm u^{n+1}-\bm u^n)\|_0&\le \|\nabla(e_{\bm u}^{n+1}-e_{\bm u}^n)\|_0+\|\nabla(\bm u(t_{n+1})-\bm u(t_n))\|_0\\
&\le \|\nabla e_{\bm u}^{n+1}\|_0+\|\nabla e_{\bm u}^n\|_0+\int_{t_n}^{t_{n+1}}\|\bm u_t\|_1\,dt.
\end{aligned}
\end{align}
When $\tau\le \frac{\theta}{2C_2}$, we have $|e_q^n|\le \frac 12$ for $0\le n\le N$. Therefore, it holds $\frac 12\le q^n\le \frac 32$ for $0\le n\le N$, and thus $q^{n+1}+q^n\ge 1$. Collecting~\eqref{C3hat:1} and~\eqref{C3hat:2} yields
\begin{align}\label{pre_C3}
\begin{aligned}
\theta|d_te_q^{n+1}|&\le \theta(q^{n+1}+q^n)|d_te_q^{n+1}|\\
&\le \left(\frac 12c_{\bm f}^2+2C_1+1\right)\tau+(2C_1+1)\int_{t_n}^{t_{n+1}}\|\bm u_t\|_0^2\,dt\\
&\quad+\left(\sqrt{2C_1T\nu}+M\nu\right)\left(\|\nabla e_{\bm u}^{n+1}\|_0+\|\nabla e_{\bm u}^n\|_0+\int_{t_n}^{t_{n+1}}\|\bm u_t\|_1\,dt\right).
\end{aligned}
\end{align}
Squaring both sides of~\eqref{pre_C3} and applying the Cauchy-Schwarz inequality, we find
\begin{align}\label{theta**2}
\begin{aligned}
\theta^2|d_te_q^{n+1}|^2&\le \left[\left(\frac 12c_{\bm f}^2+2C_1+1\right)^2+(2C_1+1)^2+3\left(\sqrt{2C_1T\nu}+M\nu\right)^2\right]\\
&\quad\cdot\left[\tau^2+\tau\int_{t_n}^{t_{n+1}}\|\bm u_t\|_0^4\,dt+\|\nabla e_{\bm u}^{n+1}\|_0^2+\|\nabla e_{\bm u}^n\|_0^2+\tau\int_{t_n}^{t_{n+1}}\|\bm u_t\|_1^2\,dt\right].
\end{aligned}
\end{align}
By the regularity of $\bm u$ and Theorem~\ref{thm:DRLM1}, we have 
\begin{align}\label{M_nu}
\int_{0}^{T}\|\bm u_t\|_0^4\,dt\le \widetilde{M},\quad \int_0^T\|\bm u_t\|_1^2\,dt\le M,\quad \sum_{n=0}^N\|\nabla e_{\bm u}^n\|_0^2\le \frac{2C_1}{\nu}\tau.
\end{align}
The proof of Lemma~\ref{lemma:dteq} is completed by summing~\eqref{theta**2} over $n=0,1,\ldots,N-1$ and using~\eqref{M_nu}, where the resulting constant $C_3$, given by
\begin{align*}
C_3=\left[\left(\frac 12c_{\bm f}^2+2C_1+1\right)^2+(2C_1+1)^2+3\left(\sqrt{2C_1T\nu}+M\nu\right)^2\right]\left(T+\widetilde{M}+M+\frac{4C_1}{\nu}\right),
\end{align*}
is independent of $\tau$ and $\theta$.
\end{proof}

Next, we state the main result in this subsection and present a detailed proof of the pressure error estimate. In addition to~\eqref{regularity:must}, the following extra regularity assumptions are imposed (cf. Remark~\ref{rm:pressure}):
\begin{align}\label{regularity:extra}
\begin{aligned}
&\bm u_t\in L^2(0,T;\bm H^2(\Omega))\cap L^4(0,T;\bm H^1(\Omega))\cap L^{\infty}(0,\tau_0;\bm H^1(\Omega)),\\
&\bm u_{tt}\in L^2(0,T;\bm L^2(\Omega))\cap L^{\infty}(0,\tau_0;\bm L^2(\Omega)),\quad \bm u_{ttt}\in L^2(0,T;\bm H^{-1}(\Omega)).
\end{aligned}
\end{align}
Let $\widehat{M}$ be a constant such that
\begin{align}\label{M:pressure}
\begin{aligned}
\max\left\{\sup_{t\in [0,\tau_0]}\|\bm u_t\|_1,\;\int_0^T\|\bm u_t\|_2^2\,dt,\;\int_0^T\|\bm u_t\|_1^4\,dt\right\}\le \widehat{M},\\
\max\left\{\sup_{t\in [0,\tau_0]}\|\bm u_{tt}\|_0,\;\int_0^T\|\bm u_{tt}\|_0^2\,dt,\;\int_0^T\|\bm u_{ttt}\|_{-1}^2\,dt\right\}\le \widehat{M}.
\end{aligned}
\end{align}
\begin{theorem}\label{thm:pressure}
Let $\{\bm u^n\}$, $\{p^n\}$, and $\{q^n\}$ be generated by the first-order DRLM scheme~\eqref{DRLM1}. There exist positive constants $\widehat{\tau}_0$ and $C_4$ depending on $\Omega$, $T$, $\nu$, $M$, $\widehat{M}$, $\bm u_0$, and $\bm f$ but independent of $\tau$, $\theta$, and $n$ such that the following error estimate holds for all $\tau\le \min\{\tau_0,\widehat{\tau}_0,\frac{\theta}{2C_2}\}$: 
\begin{align*}
{\tau} \sum_{i=0}^n\|e_p^{i+1}\|^2_{L^2/\mathbb R}\le C_4\left(1+\frac{1}{\theta}\right)\tau^{2}, \quad 0\le n\le N-1,
\end{align*}
where the constants $\tau_0$ and $C_2$ are given in Theorem~\ref{thm:DRLM1}. 
\end{theorem}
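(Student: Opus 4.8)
The plan is to bound the pressure error through the inf-sup (LBB) condition, thereby reducing it to quantities that are either already controlled by Theorem~\ref{thm:DRLM1} and Lemma~\ref{lemma:dteq}, or can be controlled by a new estimate on the discrete time derivative of the velocity error. Concretely, I would isolate $\nabla e_p^{n+1}$ from the velocity-pressure error equation~\eqref{err_eq_u}, test against an arbitrary $\bm v\in\bm H_0^1(\Omega)$, and invoke the LBB inequality $\beta\|e_p^{n+1}\|_{L^2/\mathbb R}\le \sup_{\bm v\neq\bm 0}(e_p^{n+1},\nabla\cdot\bm v)/\|\bm v\|_1$ with inf-sup constant $\beta>0$ to obtain
\begin{align*}
\beta\|e_p^{n+1}\|_{L^2/\mathbb R}\le \|\bm R_{\bm u}^{n+1}\|_{-1}+\|\bm G^{n+1}\|_{-1}+\Big\|\tfrac{e_{\bm u}^{n+1}-e_{\bm u}^n}{\tau}\Big\|_0+\nu\|\nabla e_{\bm u}^{n+1}\|_0,
\end{align*}
where $\bm G^{n+1}$ is the convective residual, whose $\bm H^{-1}$ norm is controlled exactly as the terms $B_1$--$B_4$ of~\eqref{B1234} with $e_{\bm u}^{n+1}$ replaced by the test function. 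After squaring, multiplying by $\tau$, and summing over $i$, every term except the discrete time derivative is already $\mathcal O((1+\tfrac1\theta)\tau^2)$: the truncation term obeys $\tau\sum\|\bm R_{\bm u}^{i+1}\|_{-1}^2\le\tau^2\!\int_0^T\|\bm u_{tt}\|_{-1}^2\,dt$; the viscous term equals $\nu\cdot(\tau\nu\sum\|\nabla e_{\bm u}^{i+1}\|_0^2)$ and is bounded via~\eqref{result_4}; and the convective term is handled as in Step~1 of Theorem~\ref{thm:DRLM1}, using $\sum\|e_{\bm u}^i\|_1^4\le\tau$ from~\eqref{result_2} and $|e_q^{i+1}|^2\le c_7(\tfrac1\theta+\tfrac1{\theta^2})\tau^2$ from~\eqref{BDT_improve}.

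The only obstruction is the discrete time-derivative term: the crude estimate $\sum\|e_{\bm u}^{i+1}-e_{\bm u}^i\|_0^2\le C_1(1+\tfrac1\theta)\tau^2$ from Theorem~\ref{thm:DRLM1} yields only $\tau\sum\|d_t e_{\bm u}^{i+1}\|_0^2=\mathcal O(\tau)$, which is suboptimal. To upgrade this I would prove the sharper bound $\max_{1\le n\le N}\|d_t e_{\bm u}^{n}\|_0^2=\mathcal O((1+\tfrac1\theta)\tau^2)$, whence $\tau\sum\|d_t e_{\bm u}^{i+1}\|_0^2\le T\max_n\|d_t e_{\bm u}^n\|_0^2=\mathcal O((1+\tfrac1\theta)\tau^2)$. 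This is obtained in two parts. For the first step, since $e_{\bm u}^0=\bm 0$, I would test the $n=0$ error equation with $d_t e_{\bm u}^1=e_{\bm u}^1/\tau$; the viscous contribution $\frac{\nu}{\tau}\|\nabla e_{\bm u}^1\|_0^2$ is nonnegative, while $\|\bm R_{\bm u}^1\|_0=\mathcal O(\tau)$ and the convective residual $(\bm u(t_1)\cdot\nabla)\bm u(t_1)-q^1(\bm u_0\cdot\nabla)\bm u_0=\mathcal O(\tau)$ follow from the near-$t=0$ regularity in~\eqref{regularity:extra} and~\eqref{M:pressure} together with $|e_q^1|\le\tfrac{C_2}{\theta}\tau$, giving $\|d_t e_{\bm u}^1\|_0=\mathcal O(\tau)$ after absorbing $\|d_t e_{\bm u}^1\|_0^2$ by Young's inequality. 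For $n\ge1$ I would difference~\eqref{err_eq_u} in time to get an equation for $\bm w^{n+1}:=d_t e_{\bm u}^{n+1}$ whose truncation term now involves $\bm u_{ttt}$ (hence the assumption $\bm u_{ttt}\in L^2(0,T;\bm H^{-1})$), then test with $\bm w^{n+1}$. The viscous term $\nu\|\nabla\bm w^{n+1}\|_0^2$ absorbs the second-differenced convection contributions in their $\|\cdot\|_1$ slot, the factors $d_t q^{n+1}=d_t e_q^{n+1}$ being controlled by Lemma~\ref{lemma:dteq}; a final application of the discrete Gronwall inequality (Lemma~\ref{lemma:Gronwall}) with initial datum $\|\bm w^1\|_0^2$ closes the estimate. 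Throughout, the restriction $\tau\le\frac{\theta}{2C_2}$ guarantees $q^n\in[\tfrac12,\tfrac32]$, exactly as in Lemma~\ref{lemma:dteq}.

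The main obstacle is precisely this time-derivative estimate. In a fully discrete finite-element setting one could bound the pressure directly through the discrete Stokes operator, but in the present semi-discrete analysis no such inequality is available (cf.~Remark~\ref{rm:pressure}), so the suboptimal factor $1/\tau$ produced by $\|d_t e_{\bm u}^{n+1}\|_0$ must be defeated by genuinely proving first-order convergence of the discrete velocity time derivative. The delicate points there are (i) the first-step bound, where the expected $\mathcal O(\tau)$ rate is recovered only by exploiting the extra regularity of $\bm u_t$ and $\bm u_{tt}$ on $[0,\tau_0]$, and (ii) the bookkeeping of the second-order time differences of the convection term and of $q^{n+1}(\bm u^n\cdot\nabla)\bm u^n$, so that every resulting term is summable with weight $\tau$ to order $\tau^2$. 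Once $\max_n\|d_t e_{\bm u}^n\|_0^2=\mathcal O((1+\tfrac1\theta)\tau^2)$ is in hand, assembling the inf-sup bound yields $\tau\sum_{i=0}^n\|e_p^{i+1}\|_{L^2/\mathbb R}^2\le C_4(1+\tfrac1\theta)\tau^2$, with $C_4$ depending on $\beta$, $M$, $\widehat M$, and the constants of Theorem~\ref{thm:DRLM1} and Lemma~\ref{lemma:dteq}.
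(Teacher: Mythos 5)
Your proposal is correct and follows essentially the same route as the paper's proof: reduce the pressure error to velocity/multiplier quantities via the inf-sup (equivalently, the $\|\nabla\cdot\|_{-1}$) characterization, recognize that the only obstruction is the discrete time derivative $\|d_te_{\bm u}^{n+1}\|_0$, and upgrade it to $\mathcal{O}((1+\frac{1}{\theta})\tau^2)$ by a separate first-step estimate plus time-differencing the error equation, controlling $d_te_q^{n+1}$ with Lemma~\ref{lemma:dteq}, absorbing the $\tau\|d_te_{\bm u}^n\|_1^2$ terms into the viscous sum (the paper's condition $\tau\le\widehat{\tau}_0$), and closing with the discrete Gronwall inequality. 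The paper's Steps 1--3 implement exactly this plan, so no substantive difference remains.
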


\begin{proof}\hspace{0.1cm}
To obtain optimal error estimates for the pressure, we first apply the inf-sup condition to derive an upper bound for the pressure error that involves only the velocity and Lagrange multiplier. This leads to estimating the discrete time derivative of the velocity error -- the main step in the proof, which can be carried out using Lemma~\ref{lemma:dteq} and the regularity assumptions on~$\bm u$. Finally, the desired estimate is obtained using Gronwall's inequality in Lemma~\ref{lemma:Gronwall}. 

\textbf{Step 1: Estimates for the pressure.} A standard approach to estimating the pressure error is to apply the inf-sup (or LBB) condition after establishing the velocity error bound (cf.~\cite{L-Shen20,L-Shen22}). Equivalently, we note from~\cite[Lemma IV.1.9]{Boyer13} that there exists a constant $\beta>0$ such that for all $\varphi\in L^2(\Omega)/\mathbb R$, we have
\begin{align}\label{pr:switch}
\frac{1}{\beta}\|\nabla\varphi\|_{-1}\le\|\varphi\|_{L^2/\mathbb R}\le \beta\|\nabla\varphi\|_{-1}.    
\end{align}
In order to bound $\|e_p^n\|_{L^2/\mathbb R}$ for $1\le n\le N$, it suffices to estimate $\|\nabla e_p^n\|_{-1}$. From~\eqref{err_eq_u}, we have
\begin{align}\label{eq:pressure}
\begin{aligned}
\nabla e_p^{n+1}=-\frac{e_{\bm u}^{n+1}-e_{\bm u}^n}{\tau}+\nu\Delta e_{\bm u}^{n+1}+\bm R_{\bm u}^{n+1}+(\bm u(t_{n+1})\cdot\nabla)\bm u(t_{n+1})-q^{n+1}(\bm u^n\cdot\nabla)\bm u^n.
\end{aligned}
\end{align}
For any $\bm v\in\bm H_0^1(\Omega)$, the $\bm L^2$ inner product of~\eqref{eq:pressure} with $\bm v$ leads to
\begin{align}\label{eq:any_v}
\begin{aligned}
\left(\nabla e_p^{n+1},\bm v\right)&=-\frac{1}{\tau}\left(e_{\bm u}^{n+1}-e_{\bm u}^n,\bm v\right)-\nu\left(\nabla e_{\bm u}^{n+1},\nabla\bm v\right)+\left(\bm R_{\bm u}^{n+1},\bm v\right)\\
&\qquad+b\left(\bm u(t_{n+1}),\bm u(t_{n+1}),\bm v\right)-q^{n+1}b\left(\bm u^n,\bm u^n,\bm v\right)\\
&\le \frac{1}{\tau}\|e_{\bm u}^{n+1}-e_{\bm u}^n\|_{-1}\|\bm v\|_1+\nu\|\nabla e_{\bm u}^{n+1}\|_0\|\bm v\|_1+\bigg(\int_{t_n}^{t_{n+1}}\|\bm u_{tt}\|_{-1}\,dt\\
&\quad +2c_2M\int_{t_n}^{t_{n+1}}\|\bm u_t\|_0\,dt+2c_0c_2M\|e_{\bm u}^n\|_0+c_0c_2\|e_{\bm u}^n\|_1^2+c_2M^2|e_q^{n+1}|\bigg)\|\bm v\|_1,
\end{aligned}
\end{align}
where in the last inequality, we have used estimates~\eqref{est:Ru_}-\eqref{est:B4_} with $e_{\bm u}^{n+1}$ replaced by $\bm v$. By the definition of $\bm H^{-1}$-norm, we know that
$$\|\nabla e_p^{n+1}\|_{-1}=\inf\left\{c\ge 0:\left(\nabla e_p^{n+1},\bm v\right)\le c\|\bm v\|_1,\;\forall\,\bm v\in\bm H_0^1(\Omega)\right\}.$$
Therefore, \eqref{eq:any_v} gives us
\begin{align}\label{est:-1norm}
\begin{aligned}
\|\nabla e_p^{n+1}\|_{-1}&\le \frac{1}{\tau}\|e_{\bm u}^{n+1}-e_{\bm u}^n\|_{-1}+\nu\|\nabla e_{\bm u}^{n+1}\|_0+\int_{t_n}^{t_{n+1}}\|\bm u_{tt}\|_{-1}\,dt\\
&\quad+2c_2M\int_{t_n}^{t_{n+1}}\|\bm u_t\|_0\,dt+2c_0c_2M\|e_{\bm u}^n\|_0+c_0c_2\|e_{\bm u}^n\|_1^2+c_2M^2|e_q^{n+1}|.
\end{aligned}
\end{align}
Squaring both sides of~\eqref{est:-1norm} and applying the Cauchy-Schwarz inequality, we deduce that
\begin{align}\label{est:x}
\begin{aligned}
\|\nabla e_p^{n+1}\|_{-1}^2&\le \left(1+\nu+1+4c_2^2M^2+4c_0^2c_2^2M^2+c_0^2c_2^2+c_2^2M^4\right)\\
&\quad\cdot\bigg(\frac{1}{\tau^2}\|e_{\bm u}^{n+1}-e_{\bm u}^n\|_{-1}^2+\nu\|\nabla e_{\bm u}^{n+1}\|_0^2+\tau\int_{t_n}^{t_{n+1}}\|\bm u_{tt}\|_{-1}^2\,dt\\
&\qquad +\tau\int_{t_n}^{t_{n+1}}\|\bm u_t\|_0^2\,dt+\|e_{\bm u}^n\|_0^2+\|e_{\bm u}^n\|_1^4+|e_q^{n+1}|^2\bigg).
\end{aligned}
\end{align}
Let $c_{10}=2+\nu+4c_2^2M^2+4c_0^2c_2^2M^2+c_0^2c_2^2+c_2^2M^4$. The sum of~\eqref{est:x} over $i=0,1,\ldots,n$ yields
\begin{align}\label{est:c9}
\begin{aligned}
\sum_{i=0}^n\|\nabla e_p^{i+1}\|_{-1}^2&\le c_{10}\bigg(\frac{1}{\tau^2}\sum_{i=0}^{n}\|e_{\bm u}^{i+1}-e_{\bm u}^i\|_{-1}^2+\nu\sum_{i=0}^{n}\|\nabla e_{\bm u}^{i+1}\|_0^2+\tau\int_0^T\|\bm u_{tt}\|_{-1}^2\,dt\\
&\qquad\quad+\tau\int_{0}^{T}\|\bm u_t\|_0^2\,dt+\sum_{i=0}^{n}\|e_{\bm u}^i\|_0^2+\sum_{i=0}^{n}\|e_{\bm u}^i\|_1^4+\sum_{i=0}^{n}|e_q^{i+1}|^2\bigg).
\end{aligned}
\end{align}
By using Theorem~\ref{thm:DRLM1}, the regularity assumption~\eqref{M:velo}, and the estimate~\eqref{result_2}, we obtain from~\eqref{est:c9} that
\begin{align}\label{pr:-1}
\begin{aligned}
\sum_{i=0}^n\|\nabla e_p^{i+1}\|_{-1}^2&\le c_{10}\bigg[\frac{1}{\tau^2}\sum_{i=0}^{n}\|e_{\bm u}^{i+1}-e_{\bm u}^i\|_{-1}^2+C_1\left(1+\frac{1}{\theta}\right)\tau+2M\tau\\
&\qquad\quad+C_1\left(1+\frac{1}{\theta}\right)T\tau+\tau+\frac{C_2^2}{\theta^2}T\tau\bigg].
\end{aligned}
\end{align}

\textbf{Step 2: Estimates for $\|e_{\bm u}^{n+1}-e_{\bm u}^n\|_{0}$.}
It remains to estimate $\|e_{\bm u}^{n+1}-e_{\bm u}^n\|_{-1}$ for $0\le n\le N-1$. Since $\|e_{\bm u}^{n+1}-e_{\bm u}^n\|_{-1}\le \|e_{\bm u}^{n+1}-e_{\bm u}^n\|_{0}$, we will bound $\|e_{\bm u}^{n+1}-e_{\bm u}^n\|_{0}$ instead.  Taking the difference of~\eqref{err_eq_u} between two consecutive time steps yields
\begin{align}\label{eq:dt}
\begin{aligned}
\frac{d_te_{\bm u}^{n+1}-d_te_{\bm u}^n}{\tau}-\nu\Delta d_te_{\bm u}^{n+1}+\nabla d_te_p^{n+1}&=d_t\bm R_{\bm u}^{n+1}+\frac{(\bm u(t_{n+1})\cdot\nabla)\bm u(t_{n+1})-(\bm u(t_n)\cdot\nabla)\bm u(t_n)}{\tau}\\
&\quad-\frac{q^{n+1}(\bm u^n\cdot\nabla)\bm u^n-q^{n}(\bm u^{n-1}\cdot\nabla)\bm u^{n-1}}{\tau}.
\end{aligned}
\end{align}
Taking the $\bm L^2$ inner product of~\eqref{eq:dt} with $d_te_{\bm u}^{n+1}$ gives us
\begin{align}\label{S1S2}
\begin{aligned}
&\frac{\|d_te_{\bm u}^{n+1}\|_0^2-\|d_te_{\bm u}^n\|_0^2}{2\tau}+\frac{\|d_te_{\bm u}^{n+1}-d_te_{\bm u}^n\|_0^2}{2\tau}+\nu\|\nabla d_te_{\bm u}^{n+1}\|_0^2\\
&\quad=(d_t\bm R_{\bm u}^{n+1},d_te_{\bm u}^{n+1})+\frac{b(\bm u(t_{n+1}),\bm u(t_{n+1}),d_te_{\bm u}^{n+1})-b(\bm u(t_n),\bm u(t_n),d_te_{\bm u}^{n+1})}{\tau}\\
&\qquad -\frac{q^{n+1}b(\bm u^n,\bm u^n,d_te_{\bm u}^{n+1})-q^nb(\bm u^{n-1},\bm u^{n-1},d_te_{\bm u}^{n+1})}{\tau}\\
&\quad=(d_t\bm R_{\bm u}^{n+1},d_te_{\bm u}^{n+1})+S_1+S_2,
\end{aligned}
\end{align}
where $S_1$ and $S_2$ are defined as
\begin{align*}
S_1&=\frac{b(\bm u(t_{n+1}),\bm u(t_{n+1}),d_te_{\bm u}^{n+1})-2b(\bm u(t_n),\bm u(t_n),d_te_{\bm u}^{n+1})+b(\bm u(t_{n-1}),\bm u(t_{n-1}),d_te_{\bm u}^{n+1})}{\tau}\\
S_2&=-\frac{q^{n+1}b(\bm u^n,\bm u^n,d_te_{\bm u}^{n+1})-b(\bm u(t_n),\bm u(t_n),d_te_{\bm u}^{n+1})}{\tau}\\
&\hspace{3cm}+\frac{q^nb(\bm u^{n-1},\bm u^{n-1},d_te_{\bm u}^{n+1})-b(\bm u(t_{n-1}),\bm u(t_{n-1}),d_te_{\bm u}^{n+1})}{\tau}.
\end{align*}
By noting that $d_te_{\bm u}^{n+1}\in\bm H_0^1(\Omega)$ and
\begin{align*}
d_t\bm R_{\bm u}^{n+1}&=\frac{1}{\tau^2}\left[\int_{t_n}^{t_{n+1}}(t-t_n)\bm u_{tt}(t)dt-\int_{t_{n-1}}^{t_n}(t-t_{n-1})\bm u_{tt}(t)dt\right]\\
&=\frac{1}{\tau^2}\int_{t_{n-1}}^{t_n}\int_r^{r+\tau}\int_s^{r+\tau}\bm u_{ttt}(\xi)\,d\xi \,ds\,dr,
\end{align*}
we obtain
\begin{align}\label{dtR}
(d_t\bm R_{\bm u}^{n+1},d_te_{\bm u}^{n+1})\le \|d_t\bm R_{\bm u}^{n+1}\|_{-1}\|d_te_{\bm u}^{n+1}\|_1\le \left(\int_{t_{n-1}}^{t_{n+1}}\|\bm u_{ttt}\|_{-1}dt\right)\|d_te_{\bm u}^{n+1}\|_1.
\end{align}
Due to the linearity of the operator $b$, $S_1$ can be written as follows
\begin{align*}
S_1=&\frac{b(\bm u(t_{n+1})-2\bm u(t_n)+\bm u(t_{n-1}),\bm u(t_{n-1}),d_te_{\bm u}^{n+1})}{\tau}\\
&\qquad+\frac{b(\bm u(t_{n}),\bm u(t_{n+1})-2\bm u(t_n)+\bm u(t_{n-1}),d_te_{\bm u}^{n+1})}{\tau}\\
&\qquad+\frac{b(\bm u(t_{n+1})-\bm u(t_n),\bm u(t_{n+1})-\bm u(t_{n-1}),d_te_{\bm u}^{n+1})}{\tau}.
\end{align*}
Using the properties of $b$ in~\eqref{b:skew}-\eqref{b:original} and the regularity assumption~\eqref{M:velo}, we find
\begin{align}\label{S1}
\begin{aligned}
S_1&\le \frac{c_2}{\tau}\|\bm u(t_{n+1})-2\bm u(t_n)+\bm u(t_{n-1})\|_0\|\bm u(t_{n-1})\|_2\|d_te_{\bm u}^{n+1}\|_1\\
&\quad+\frac{c_2}{\tau}\|\bm u(t_{n})\|_2\|\bm u(t_{n+1})-2\bm u(t_n)+\bm u(t_{n-1})\|_0\|d_te_{\bm u}^{n+1}\|_1\\
&\quad+\frac{c_2}{\tau}\|\bm u(t_{n+1})-\bm u(t_n)\|_1\|\bm u(t_{n+1})-\bm u(t_{n-1})\|_1\|d_te_{\bm u}^{n+1}\|_1\\
&\le \left(2c_2M\int_{t_{n-1}}^{t_{n+1}}\|\bm u_{tt}\|_0\,dt+2c_2\int_{t_{n-1}}^{t_{n+1}}\|\bm u_t\|_1^2\,dt\right)\|d_te_{\bm u}^{n+1}\|_1.
\end{aligned}
\end{align}
On the other hand, $S_2$ can be expressed as
\begin{align*}
S_2&=-d_te_q^{n+1}b(\bm u^n,\bm u^n,d_te_{\bm u}^{n+1})\\
&\quad-e_q^{n}\frac{b(\bm u(t_n),\bm u(t_n)-\bm u(t_{n-1}),d_te_{\bm u}^{n+1})+b(\bm u(t_n)-\bm u(t_{n-1}),\bm u(t_{n-1}),d_te_{\bm u}^{n+1})}{\tau}\\
&\quad-q^n\frac{b(e_{\bm u}^{n-1},\bm u(t_n)-\bm u(t_{n-1}),d_te_{\bm u}^{n+1})+b(\bm u(t_n)-\bm u(t_{n-1}),e_{\bm u}^{n},d_te_{\bm u}^{n+1})}{\tau}\\
&\quad -q^n\left[b(d_te_{\bm u}^{n},\bm u(t_n),d_te_{\bm u}^{n+1})+b(\bm u(t_{n-1}),d_te_{\bm u}^{n},d_te_{\bm u}^{n+1})\right]\\
&\quad -q^n\left[b(d_te_{\bm u}^{n},e_{\bm u}^{n-1},d_te_{\bm u}^{n+1})+b(e_{\bm u}^n,d_te_{\bm u}^{n},d_te_{\bm u}^{n+1})\right].
\end{align*}
By using~\eqref{b:skew}-\eqref{b:original} and~\eqref{M:velo} again, along with Lemma~\ref{range:q}, one arrives at
\begin{align}\label{S2}
\begin{aligned}
S_2&\le c_2|d_te_q^{n+1}|\|\bm u^n\|_1^2\|d_te_{\bm u}^{n+1}\|_1+\frac{2c_2M}{\tau}|e_q^n|\left(\int_{t_{n-1}}^{t_n}\|\bm u_t\|_0\,dt\right)\|d_te_{\bm u}^{n+1}\|_1\\
&\quad +\frac{c_0c_2}{\tau}(\|e_{\bm u}^{n-1}\|_0+\|e_{\bm u}^{n}\|_0)\left(\int_{t_{n-1}}^{t_n}\|\bm u_t\|_2\,dt\right)\|d_te_{\bm u}^{n+1}\|_1\\
&\quad +2c_0c_2M\|d_te_{\bm u}^{n}\|_0\|d_te_{\bm u}^{n+1}\|_1+c_0c_2(\|e_{\bm u}^{n-1}\|_1+\|e_{\bm u}^{n}\|_1)\|d_te_{\bm u}^{n}\|_1\|d_te_{\bm u}^{n+1}\|_1.
\end{aligned}
\end{align}
Collecting~\eqref{S1S2}-\eqref{S2}, we find
\begin{align}\label{before_Cauchy}
\begin{aligned}
&\frac{\|d_te_{\bm u}^{n+1}\|_0^2-\|d_te_{\bm u}^n\|_0^2}{2\tau}+\frac{\|d_te_{\bm u}^{n+1}-d_te_{\bm u}^n\|_0^2}{2\tau}+\nu\|\nabla d_te_{\bm u}^{n+1}\|_0^2\\
&\quad\le \bigg[\int_{t_{n-1}}^{t_{n+1}}\|\bm u_{ttt}\|_{-1}dt+2c_2M\int_{t_{n-1}}^{t_{n+1}}\|\bm u_{tt}\|_0\,dt+2c_2\int_{t_{n-1}}^{t_{n+1}}\|\bm u_t\|_1^2\,dt\\
&\qquad\quad+c_2|d_te_q^{n+1}|\|\bm u^n\|_1^2+\frac{2c_2M}{\tau}|e_q^n|\int_{t_{n-1}}^{t_n}\|\bm u_t\|_0\,dt+\frac{c_0c_2}{\tau}(\|e_{\bm u}^{n-1}\|_0+\|e_{\bm u}^{n}\|_0)\int_{t_{n-1}}^{t_n}\|\bm u_t\|_2\,dt\\
&\qquad\quad+2c_0c_2M\|d_te_{\bm u}^{n}\|_0+c_0c_2(\|e_{\bm u}^{n-1}\|_1+\|e_{\bm u}^{n}\|_1)\|d_te_{\bm u}^{n}\|_1\bigg]\|d_te_{\bm u}^{n+1}\|_1.
\end{aligned}
\end{align}
Using similar arguments as in~\eqref{est:u_rhs} and the facts (cf. Theorem~\ref{thm:DRLM1}) that for $0\le n\le N$,
\begin{align*}
&|e_q^n|\le \frac{C_2}{\theta}\tau,\quad\|e_{\bm u}^n\|_0^2\le C_1\left(1+\frac{1}{\theta}\right)\tau^2,\quad\|e_{\bm u}^n\|_1^2\le \frac{2(1+c_1)^2C_1}{\nu}\tau,\\
&\|\bm u^n\|_1^2\le (\|\bm u(t_n)\|_1+\|e_{\bm u}^n\|_1)^2\le 2\left(M^2+\frac{2(1+c_1)^2C_1T}{\nu}\right),
\end{align*}
we obtain from~\eqref{before_Cauchy} that
\begin{align}\label{c10}
\begin{aligned}
&\frac{\|d_te_{\bm u}^{n+1}\|_0^2-\|d_te_{\bm u}^n\|_0^2}{2\tau}+\frac{\|d_te_{\bm u}^{n+1}-d_te_{\bm u}^n\|_0^2}{2\tau}+\frac{\nu}{2}\|\nabla d_te_{\bm u}^{n+1}\|_0^2\\
&\quad\le c_{11}\bigg[\tau\int_{t_{n-1}}^{t_{n+1}}\|\bm u_{ttt}\|_{-1}^2dt+\tau\int_{t_{n-1}}^{t_{n+1}}\|\bm u_{tt}\|_0^2\,dt+\tau\int_{t_{n-1}}^{t_{n+1}}\|\bm u_t\|_1^4\,dt\\
&\qquad\qquad+|d_te_q^{n+1}|^2+\frac{\tau}{\theta^2}\int_{t_{n-1}}^{t_n}\|\bm u_t\|_0^2\,dt+\left(1+\frac{1}{\theta}\right)\tau\int_{t_{n-1}}^{t_n}\|\bm u_t\|_2^2\,dt+\|d_te_{\bm u}^{n}\|_0^2+\tau\|d_te_{\bm u}^{n}\|_1^2\bigg],
\end{aligned}
\end{align}
with
\begin{align*}
c_{11}=&\frac{(1+c_1)^2}{2\nu}\bigg[2+8c_2^2M^2+8c_2^2+4c_2^2\left(M^2+\frac{2(1+c_1)^2C_1T}{\nu}\right)^2+4C_2^2c_2^2M^2+4c_0^2c_2^2C_1\\
&\qquad\qquad+4c_0^2c_2^2M^2+\frac{8c_0^2c_2^2(1+c_1)^2C_1}{\nu}\bigg].
\end{align*}
It should be noted that the term $\tau\|d_te_{\bm u}^{n}\|_1^2$ on the right-hand side of~\eqref{c10} introduces additional complexity to the analysis, which does not occur in the 2D case~\cite{L-Shen22}.
Taking the sum of~\eqref{c10} over $i=1,2,\ldots,n$ and multiplying the resulting inequality by $2\tau$ yields
\begin{align}\label{dte1}
\begin{aligned}
&\|d_te_{\bm u}^{n+1}\|_0^2+\sum_{i=1}^n\|d_te_{\bm u}^{i+1}-d_te_{\bm u}^i\|_0^2+\tau\nu\sum_{i=1}^n\|\nabla d_te_{\bm u}^{i+1}\|_0^2\\
&\quad\le \|d_te_{\bm u}^{1}\|_0^2+2c_{11}\tau\bigg[2\tau\int_{0}^{T}\|\bm u_{ttt}\|_{-1}^2dt+2\tau\int_{0}^{T}\|\bm u_{tt}\|_0^2\,dt+2\tau\int_{0}^{T}\|\bm u_t\|_1^4\,dt\\
&\qquad\qquad\qquad\qquad\quad+\sum_{i=1}^n|d_te_q^{i+1}|^2+\frac{\tau}{\theta^2}\int_{0}^{T}\|\bm u_t\|_0^2\,dt+\left(1+\frac{1}{\theta}\right)\tau\int_{0}^{T}\|\bm u_t\|_2^2\,dt\\
&\qquad\qquad\qquad\qquad\quad+\sum_{i=1}^n\|d_te_{\bm u}^{i}\|_0^2+\tau\sum_{i=1}^n\|d_te_{\bm u}^{i}\|_1^2\bigg]\\
&\quad\le \|d_te_{\bm u}^{1}\|_0^2+2c_{11}\tau^2\|d_te_{\bm u}^{1}\|_1^2+2c_{11}\tau\bigg[6\widehat{M}\tau+\frac{C_3\tau}{\theta^2}+\frac{M\tau}{\theta^2}+\left(1+\frac{1}{\theta}\right)\widehat{M}\tau\\
&\qquad\qquad\qquad\qquad\qquad\qquad\qquad\qquad+\sum_{i=1}^n\|d_te_{\bm u}^{i}\|_0^2+\tau\sum_{i=2}^n\|d_te_{\bm u}^{i}\|_1^2\bigg],
\end{aligned}
\end{align}
where we have used Lemma~\ref{lemma:dteq} and regularity assumptions~\eqref{M:velo} and~\eqref{M:pressure} in the last estimate. Note that, by convention, the last summation on the right-hand side of~\eqref{dte1} is set to zero when $n=1$. To bound $\|d_te_{\bm u}^{1}\|_0^2+2c_{11}\tau^2\|d_te_{\bm u}^{1}\|_1^2$ in~\eqref{dte1}, we let $n=0$ in~\eqref{est:velo} and obtain
\begin{align}\label{e1}
\begin{aligned}
\frac{\|e_{\bm u}^{1}\|_0^2}{\tau}+\nu\|\nabla e_{\bm u}^{1}\|_0^2=(\bm R_{\bm u}^{1},e_{\bm u}^{1})+b(\bm u(t_{1}),\bm u(t_{1}),e_{\bm u}^{1})-q^{1}b(\bm u(t_0),\bm u(t_0),e_{\bm u}^{1}).
\end{aligned}
\end{align}
Using similar arguments as in~\eqref{est:Ru_}-\eqref{est:B4_} and the regularity of $\bm u$ in~\eqref{M:pressure}, we deduce
\begin{align}\label{e1:}
\begin{aligned}
\frac{\|e_{\bm u}^{1}\|_0^2}{\tau}+\nu\|\nabla e_{\bm u}^{1}\|_0^2&\le \left(\int_{0}^{t_{1}}\|\bm u_{tt}\|_{0}\,dt+2c_2M\int_{0}^{t_{1}}\|\bm u_t\|_1\,dt+c_2M^2|e_q^{1}|\right)\|e_{\bm u}^{1}\|_0\\
&\le \tau\left(\widehat{M}+2c_2M\widehat{M}+\frac{C_2c_2M^2}{\theta}\right)\|e_{\bm u}^{1}\|_0\le c_{12}\left(1+\frac{1}{\theta}\right)\tau\|e_{\bm u}^{1}\|_0,
\end{aligned}
\end{align}
where $c_{12}=\max\{\widehat{M}+2c_2M\widehat{M},C_2c_2M^2\}$.
By dropping the non-negative term $\nu\|\nabla e_{\bm u}^{1}\|_0^2$ on the left-hand side of~\eqref{e1:}, we find
\begin{align}\label{e1:tau2}
\begin{aligned}
\|e_{\bm u}^{1}\|_0&\le c_{12}\left(1+\frac{1}{\theta}\right)\tau^2\quad\text{ and }\quad \|d_te_{\bm u}^{1}\|_0=\frac{1}{\tau}\|e_{\bm u}^{1}\|_0\le c_{12}\left(1+\frac{1}{\theta}\right)\tau.
\end{aligned}
\end{align}
It follows from~\eqref{e1:} and~\eqref{e1:tau2} that
\begin{align}\label{tau3}
\tau^2\|d_te_{\bm u}^1\|_1^2=\|e_{\bm u}^1\|_1^2\le (1+c_1)^2\|\nabla e_{\bm u}^1\|_0^2\le \frac{(1+c_1)^2}{\nu}c_{12}^2\left(1+\frac{1}{\theta}\right)^2\tau^3.
\end{align}
From~\eqref{dte1},~\eqref{e1:tau2}, and~\eqref{tau3}, one obtains 
\begin{align}\label{c10c12}
\begin{aligned}
&\|d_te_{\bm u}^{n+1}\|_0^2+\sum_{i=1}^n\|d_te_{\bm u}^{i+1}-d_te_{\bm u}^i\|_0^2+\tau\nu\sum_{i=1}^n\|\nabla d_te_{\bm u}^{i+1}\|_0^2\\
&\quad\le c_{13}\left(1+\frac{1}{\theta}+\frac{1}{\theta^2}\right)\tau^2+2c_{11}\tau \sum_{i=1}^n\|d_te_{\bm u}^{i}\|_0^2+2c_{11}\tau^2\sum_{i=2}^n\|d_te_{\bm u}^{i}\|_1^2,
\end{aligned}
\end{align}
where $c_{13}=2c_{12}^2+\frac{4(1+c_1)^2c_{11}c_{12}^2T}{\nu}+2c_{11}(C_3+M+7\widehat{M})$.   
If $$\tau\le \frac{\nu}{2c_{11}(1+c_1)^2}=:\widehat{\tau}_0,$$  the last summation in~\eqref{c10c12} can be bounded using Poincar\'{e} inequality~\eqref{poincare:1} by
\begin{align}\label{cancel}
2c_{11}\tau^2\sum_{i=2}^n\|d_te_{\bm u}^{i}\|_1^2\le 2c_{11}(1+c_1)^2\tau^2\sum_{i=2}^n\|\nabla d_te_{\bm u}^{i}\|_0^2\le \tau\nu\sum_{i=1}^n\|\nabla d_te_{\bm u}^{i+1}\|_0^2.
\end{align}
In light of~\eqref{cancel}, we derive from~\eqref{c10c12} that
\begin{align}\label{gronwall_c12}
\|d_te_{\bm u}^{n+1}\|_0^2\le c_{13}\left(1+\frac{1}{\theta}+\frac{1}{\theta^2}\right)\tau^2+2c_{11}\tau \sum_{i=1}^n\|d_te_{\bm u}^{i}\|_0^2,\quad n\ge 1.
\end{align}
By applying Gronwall's inequality in Lemma~\ref{lemma:Gronwall} to~\eqref{gronwall_c12}, noting from~\eqref{e1:tau2} and the definition of $c_{13}$  that $\|d_te_{\bm u}^{1}\|_0^2\le c_{13}\left(1+\frac{1}{\theta}+\frac{1}{\theta^2}\right)\tau^2$, we end up with
\begin{align}\label{bound:dteu}
\frac{1}{\tau^2}\|e_{\bm u}^{n+1}-e_{\bm u}^n\|_{0}^2=\|d_te_{\bm u}^{n+1}\|_0^2\le c_{13}\left(1+\frac{1}{\theta}+\frac{1}{\theta^2}\right)e^{2c_{11}T}\tau^2,\quad n\ge 0.
\end{align}

\textbf{Step 3: Optimal convergence rate for the pressure.}
Finally, the combination of~\eqref{pr:-1},~\eqref{bound:dteu}, and the fact $\|e_{\bm u}^{i+1}-e_{\bm u}^i\|_{-1}\le \|e_{\bm u}^{i+1}-e_{\bm u}^i\|_{0}$ results in
\begin{align}\label{c14}
\begin{aligned}
\sum_{i=0}^n\|\nabla e_p^{i+1}\|_{-1}^2&\le c_{10}\bigg[c_{13}\left(1+\frac{1}{\theta}+\frac{1}{\theta^2}\right)Te^{2c_{11}T}+C_1\left(1+\frac{1}{\theta}\right)+2M\\
&\qquad\quad+C_1\left(1+\frac{1}{\theta}\right)T+1+\frac{C_2^2}{\theta^2}T\bigg]\tau\\
&\le c_{14}\left(1+\frac{1}{\theta}\right)\tau,
\end{aligned}
\end{align}
provided that $\tau\le \min\{\tau_0,\widehat{\tau}_0,\frac{\theta}{2C_2}\}$, where $c_{14}=c_{10}\left(2c_{13}Te^{2c_{11}T}+C_1+2M+C_1T+1+C_2^2T\right)$ (note that $\frac{1}{\theta^2}\le \frac{1}{\theta}$).
The proof of Theorem~\ref{thm:pressure} is thus completed  by combining~\eqref{pr:switch} and~\eqref{c14} with $C_4=c_{14}\beta^2$.
\end{proof}

\begin{remark}\label{rm:pressure}
The pressure error estimate in Theorem~\ref{thm:pressure} requires higher regularity on the exact solution than the velocity error estimate in Theorem~\ref{thm:DRLM1}. The main reason is that we estimate $\|e_{\bm u}^{n+1}-e_{\bm u}^n\|_{0}$ in Step~2 of the above proof, rather than directly bounding $\|e_{\bm u}^{n+1}-e_{\bm u}^n\|_{-1}$.
As discussed in~\cite{Shen92,Shen94}, the desired estimate $\|e_{\bm u}^{n+1}-e_{\bm u}^n\|_{-1}\le C\tau^2$ cannot be obtained using standard techniques involving Stokes operator~$A$: $D(A)=\bm V\cap\bm H^2(\Omega)\to\bm H$ due to the following incorrect inequality~\cite{Shen94}:
\begin{align}\label{Stokes_wrong}
\gamma\|\bm v\|_{-1}^2\le \left(A^{-1}\bm v,\bm v\right),\quad \forall\,\bm v\in\bm H,
\end{align}
for any constant $\gamma>0$. However, a discrete analogue of~\eqref{Stokes_wrong} holds when using suitable finite element spaces (cf.~\cite{de18,Lan25}). This observation suggests an alternative approach for the pressure error analysis in the fully discrete setting, which could require weaker regularity assumptions on the exact solution.
\end{remark}

\section{Numerical results}\label{sec:numeric}
We verify the convergence in time of the first-order DRLM scheme~\eqref{DRLM1} and demonstrate the role of the regularization parameter $\theta$ via a manufactured solution in two dimensions. Additional tests on energy stability and benchmark problems in two and three dimensions, such as lid-driven cavity flow and Kelvin-Helmholtz instability, can be found in~\cite{Doan25}. Note that various spatial discretization techniques, such as finite difference, finite element, or finite volume methods, can be used to construct fully discrete DRLM schemes. For simplicity, we adopt the finite difference approximation on a Marker-and-Cell (MAC) staggered grid with a uniform mesh size in both $x$- and $y$-directions, i.e., $h=h_x=h_y$. 

The external force $\bm f$ is computed from the following analytic solution, in which the velocity field satisfies homogeneous Dirichlet boundary conditions: \vspace{-0.1cm}
\begin{align*}
\bm u&=\begin{pmatrix}
5\sin^2(\pi x)\sin(2\pi y)e^{-t}\\
-5\sin(2\pi x)\sin^2(\pi y)e^{-t}
\end{pmatrix},\quad p=\cos(\pi x)\sin(\pi y)e^{-t}. \vspace{-0.2cm}
\end{align*}
We set the computational domain $\Omega=(0,1)^2$, the terminal time $T=1$, and the viscosity coefficient $\nu=0.1$. We vary both the time step size $\tau$ and the spatial mesh size $h$ with $\tau=2h$ so that spatial errors are negligible compared to temporal errors. The $\bm L^2$ and $\bm H^1$ errors of the velocity, $L^2$ errors of the pressure, and absolute errors of the Lagrange multiplier at the final time with different values of $\theta$ are reported in Table~\ref{table:conv}. 
We clearly observe first-order convergence for all variables. As $\theta$ increases, errors of the Lagrange multiplier (see the last column in Table~\ref{table:conv}) decrease roughly proportionally, with a tenfold increase in $\theta$ resulting in nearly a tenfold decrease in the Lagrange multiplier error. Consequently, the velocity and pressure errors are improved when $\theta$ becomes larger, especially from $\theta=0.1$ to $\theta=1$. These observations agree well with the theoretical results in Theorems~\ref{thm:DRLM1} and~\ref{thm:pressure}.

\begin{table}[!htbp]
    \centering\normalsize
    \begin{tabular}{|cllllll|}
    \hline
$\theta$&$\tau$&$h$&$\|e_{\bm  u}\|_0$&$\|e_{\bm u}\|_{1}$&$\|e_p\|_{0}$&$|e_q|$\\ \hline
\multirow{5}*{$0.1$}
&1/8  &1/16 &6.38e-02 & 9.78e-01 & 9.78e-01 & 5.51e-01\\
&1/16 &1/32 &1.66e-02 [1.94]& 2.42e-01 [2.01]& 6.47e-01 [0.60]& 4.51e-01 [0.29]\\
&1/32 &1/64 &9.64e-03 [0.78]& 1.50e-01 [0.69]& 3.50e-01 [0.89]& 2.66e-01 [0.76]\\
&1/64 &1/128&4.99e-03 [0.95]& 7.94e-02 [0.92]& 1.81e-01 [0.95]& 1.44e-01 [0.89]\\
&1/128&1/256&2.56e-03 [0.96]& 4.11e-02 [0.95]& 9.27e-02 [0.97]& 7.53e-02 [0.94]\\
\hline
\multirow{5}*{1}
&1/8  &1/16 &3.48e-02 & 3.56e-01 & 4.21e-01 & 9.99e-02\\
&1/16 &1/32 &1.29e-02 [1.43]& 1.29e-01 [1.46]& 2.08e-01 [1.02]& 5.77e-02 [0.79]\\
&1/32 &1/64 &5.51e-03 [1.23]& 5.82e-02 [1.15]& 1.02e-01 [1.03]& 3.04e-02 [0.92]\\
&1/64 &1/128&2.51e-03 [1.13]& 2.75e-02 [1.08]& 5.05e-02 [1.01]& 1.55e-02 [0.97]\\
&1/128&1/256&1.20e-03 [1.06]& 1.34e-02 [1.04]& 2.51e-02 [1.01]& 7.84e-03 [0.98]\\
\hline
\multirow{5}*{10}
&1/8  &1/16 &3.39e-02 & 3.12e-01 & 3.08e-01 & 1.06e-02\\
&1/16 &1/32 &1.26e-02 [1.43]& 1.13e-01 [1.47]& 1.50e-01 [1.04]& 5.95e-03 [0.83]\\
&1/32 &1/64 &5.28e-03 [1.25]& 4.94e-02 [1.19]& 7.34e-02 [1.03]& 3.09e-03 [0.95]\\
&1/64 &1/128&2.38e-03 [1.15]& 2.29e-02 [1.11]& 3.63e-02 [1.02]& 1.56e-03 [0.99]\\
&1/128&1/256&1.12e-03 [1.09]& 1.10e-02 [1.06]& 1.80e-02 [1.01]& 7.87e-04 [0.99]\\
\hline
\multirow{5}*{100}
&1/8  &1/16 &3.38e-02 & 3.08e-01 & 2.96e-01 & 1.07e-03\\
&1/16 &1/32 &1.26e-02 [1.42]& 1.12e-01 [1.46]& 1.44e-01 [1.04]& 5.97e-04 [0.84]\\
&1/32 &1/64 &5.26e-03 [1.26]& 4.86e-02 [1.20]& 7.04e-02 [1.03]& 3.09e-04 [0.95]\\
&1/64 &1/128&2.37e-03 [1.15]& 2.25e-02 [1.11]& 3.48e-02 [1.02]& 1.57e-04 [0.98]\\
&1/128&1/256&1.12e-03 [1.08]& 1.08e-02 [1.06]& 1.73e-02 [1.01]& 7.87e-05 [1.00]\\
\hline
\end{tabular}
    \caption{Errors and convergence rates of the velocity, pressure, and Lagrange multiplier by the first-order DRLM scheme~\eqref{DRLM1} with $\tau=2h\in\{\frac 18,\frac{1}{16},\frac{1}{32},\frac{1}{64},\frac{1}{128}\}$ and $\theta\in \{0.1,1,10,100\}$.}
    \label{table:conv} \vspace{-0.4cm}
\end{table}

\section{Conclusions}\label{sec:conclusion}
In this work, we have established rigorous temporal error estimates for the first-order DRLM scheme applied to the incompressible NS equations. By carefully addressing the nonlinear structure of the Lagrange multiplier and its coupling with the primary variables, we derived optimal convergence rates for the velocity and pressure, with all error constants exhibiting the desired dependence on the regularization parameter. The proposed analysis applies to both two- and three-dimensional settings and demonstrates theoretical robustness of the DRLM method. Numerical results confirm the theoretical findings and highlight the important role of the regularization parameter in controlling Lagrange multiplier error and enhancing overall accuracy of the DRLM method. Temporal error analysis of the second- or higher-order DRLM schemes is highly challenging and requires further investigation. Other research directions in the next step also  include fully discrete convergence analysis of DRLM schemes as well as error analysis of DRLM method developed for the coupled Cahn-Hilliard-Navier-Stokes system~\cite{Doan25b}. \vspace{0.2cm}

\noindent {\bf Acknowledgements.} 
T.-T.-P. Hoang's work is partially supported by U.S. National Science Foundation under grant number DMS-2041884.  L. Ju’s work is partially supported by U.S. National Science Foundation under grant number DMS-2409634.  R. Lan's work is partially supported by National Natural Science Foundation of China under grant number 12301531, Shandong Provincial Natural Science Fund for Excellent Young Scientists Fund Program (Overseas) under grant number 2023HWYQ-064, Shandong Provincial Youth Innovation Project under the grant number 2024KJN057 and  the OUC Scientific Research Program for Young Talented Professionals.



\end{document}